\numberwithin{equation}{section}
\newcounter{mnote}
 \newcommand{\mnote}[1]{\addtocounter{mnote}{ 1}
   \ensuremath{{}^{\bullet\arabic{mnote}}}
   \marginpar{\footnotesize\em\color{red}\ensuremath{\bullet\arabic{mnote}}\ #1}}
\newtheorem{theorem}{Theorem}[section]
\newtheorem{corollary}[theorem]{Corollary}
\newtheorem{proposition}[theorem]{Proposition}
\newtheorem{remark}{Remark}[section]
\def\cA{{\mathcal A}}
\def\AAA{ \mathcal{ A } }\def\BBB{ \mathcal{ B } }
\def\MMM{ \mathcal{ M } }\def\OOO{ \mathcal{ O } }
\def\calV{\mathcal{V}}
\def\RR{ {\mathbb R} }
\def\tc{\tilde{c}}
\def\cP{{\mathcal P}}
\def\bR{{\Bbb R}}
\def\k0{\kappa_0}
\def\leaderfill{\leaders\hbox to 1em{\hss-\hss}\hfill\ }
\def\mF{g}
\begin{document}
\title[1D parametric determining form for the 2D NSE]
{
One-dimensional parametric determining form for the two-dimensional Navier-Stokes equations}
\author{C. Foias$^{1}$}\address{$^1$Department of Mathematics\\
Texas A\&M University\\ College Station, TX 77843}
\author{M. S. Jolly$^2$}
\address{$^2$Department of Mathematics\\
Indiana University \\ Bloomington, IN 47405}
\author{D. Lithio$^{2,3}$}
\address{$^3$Allstate\\
 Northbrook, IL 60062}
\author{E. S. Titi$^{1,4}$}
\address{$^4$The Department of Computer Science and Applied Mathematics\\
The Weizmann Institute of Science, Rehovot 76100, Israel.}
\email[M. S. Jolly]{msjolly@indiana.edu}
\email[D. Lithio]{dlithio@gmail.com}
\email[Edriss S. Titi]{titi@math.tamu.edu and edriss.titi@weizmann.ac.il}


\date{\today}

\subjclass[2000]{35Q30, 76F02}
\keywords{Navier-Stokes equations, global attractors, determining nodes, determining form,
parametric determining form, determining parameter}

\begin{abstract}
The evolution of a determining form 
for the 2D Navier-Stokes equations (NSE), which is an ODE on a space of trajectories
is completely described.
It is proved that at every stage of its evolution, the solution
is a convex combination of the initial trajectory and the fixed
steady state, with a dynamical convexity parameter $\theta$, which will be called the characteristic determining parameter.  That is, we show a remarkable separation of variables  formula for the solution of the determining form. Moreover, for a given initial trajectory, the dynamics of the infinite-dimensional determining form are equivalent to those
of the characteristic determining parameter $\theta$ which is governed by a one-dimensional ODE.
This one-dimensional ODE is used to show that if the solution to the determining form converges to the fixed state it does so no faster than $\OOO(\tau^{-1/2})$,
otherwise it converges to a projection of some other trajectory in the global attractor of the NSE, but no faster than
$\OOO(\tau^{-1})$, as $\tau \to \infty$, where $\tau$ is the evolutionary variable in determining form.
The one-dimensional ODE also exploited in computations which suggest that the one-sided convergence rate estimates are in fact achieved.  The ODE is then modified to
accelerate the convergence to an exponential rate.
Remarkably, it is shown that the zeros of the scalar function that governs the dynamics of $\theta$, which are called
characteristic determining values, identify in a unique fashion the trajectories in the
global attractor of the 2D NSE.  Furthermore, the one-dimensional characteristic determining form enables us to find unanticipated geometric features of
the global attractor, a subject of future research.
 \end{abstract}
\maketitle
\centerline{\it This paper is dedicated to the memory of Professor George Sell, an initiator of modern approach}  
\centerline{\it to ODEs and infinite-dimensional dynamical systems theory, a great friend, collaborator and teacher.}

\section{Introduction}\label{sec1}

Many dissipative infinite-dimensional dynamical systems have been reduced
to finite-dimensional ordinary differential equations (ODE) by the restriction to
inertial manifolds \cite{CFNTbook, CFNT,inertial1,FSTi}.  The list includes
the Kuramoto-Sivashinsky, complex Ginzburg-Landau, and certain reaction diffusion equations \cite{M-PS,T97},
just to name a few.  Whether the 2D Navier-Stokes equations (NSE) enjoys a
finite-dimensional reduction via an inertial manifold
has remained an open question since the
mid-1980's.  Recently however, it has been shown that the global attractor $\cA$ of the 2D NSE
can in fact be captured by an ODE $dv/d\tau=F(v)$ in a Banach space of trajectories.  This is an ODE in the
true sense that $F$ is a globally Lipschitz map.  This ODE has its own
evolution variable denoted by $\tau$, which is distinct from that in the NSE, which we denote by $s$.
There have been two different constructions of such an ODE, referred to as a {\it determining form}
due its connection to the notion of determining modes, nodes, volume elements, etc.  \cite{CJTi1,FP,FTfinelt,FT1, FTi,HTi,JT93}.
In the first approach, in \cite{FJKrT1},  trajectories in the global attractor of the NSE,
are precisely traveling waves in the variables $\tau$ and $s$, while in the second, in \cite{FJKrT2},
they are precisely steady states of the determining form.

This paper focuses on the latter type
of determining form. We show in section \ref{sec3} that its evolution always proceeds along
a segment in $X$ the phase space of trajectories (see \eqref{X}), connecting the initial trajectory $v_0$, and $Ju^*$, where
$u^*$ is a fixed steady state $u^*$ of the NSE, and $J$ is a finite-rank projector
(see \eqref{ape1}, \eqref{ape2} below).
This leads to a separation of variables formula for the solution of the determining form, specifically, we show in section \ref{sec3}, that the solution is a convex combination $v(\tau,s)=\theta(\tau) v_0(s)+(1-\theta(\tau))Ju^*$, $s \in \bR$, $\tau \ge 0$.  We will refer to the convexity parameter
as the {\it characteristic determining parameter}.  Moreover, if
$v(\cdot) \neq Ju(\cdot)$ for any trajectory $u(\cdot)\subset \AAA$ and there is no
trajectory in $\cA$ strictly between $v_0$ and $Ju^*$, then $v(\tau) \to Ju^*$, as $\tau \to \infty$, no faster than $\OOO(\tau^{-1/2})$.  Otherwise $v(\tau) \to J\tilde{u}$  no faster than $\OOO(\tau^{-1})$, where $\tilde{u}(\cdot) \subset \cA\setminus \{u^*\}$ is on the
closed segment connecting $Ju^*$ with $v_0$.    For a given $v_0$, the evolution of the determining form is equivalent to the dynamics of a one-dimensional ODE in the characteristic determining parameter $\theta$,
$d \theta/d\tau=\Phi( \theta;v_0,u^*)$, which we refer to as the {\it characteristic parametric determining form}, where  $\Phi$ is a Lipschitz function in $\theta \in [0,1]$.
We show in section \ref{sec4} that the function $\Phi$ is readily modified to accelerate the convergence to an exponential rate.

This surprising reduction to a one-dimensional ODE may have far reaching consequences.
 It enables us to find unanticipated geometric features of the global attractor, and develop
alternative computational approaches to finding this set, a subject of ongoing research; in particular bifurcation analysis of the 2D NSE.  Remarkably, the zeros of the real function $\Phi$, which we refer to as {\it characteristic determining values}, identify the trajectories on the global attractor of the 2D NSE in a unique fashion.
We demonstrate the utility of
this reduction by finding a limit cycle to roughly machine accuracy with just eight steps
of the secant method.

In section \ref{sec6} we exploit this scalar ODE in computations which suggest that the
one-sided estimates on the two rates of convergence are in fact achieved.
A significant advantage of this approach 
is that it avoids the padding
of a time interval to account for compounding relaxation times. 
In contrast, the direct numerical simulation of the determining form $dv/d\tau=F(v)$ would involve the sequential evaluation of a map $W(v)$ (see Theorem \ref{mainthm}).  
The image of $W$ is the unique bounded solution $w(s)$ of a system similar to the 2D NSE (see \eqref{weqn}) but
driven by $v(\tau,s)$, where $\tau$ is fixed, and  $-\infty < s  < \infty$.
On a computer, trajectories over all time 
must be truncated to, say $0 \le s \le s_2$.  Given  
$v(\tau,s)$ for $s \in [0,s_2]$, the image $W(v)$ can be effectively
approximated over subinterval $s_1 \le s \le s_2$, after a short relaxation time $s_1 > 0$,
by solving this driven NSE system, with initial condition $w(0)=0$.  Thus in order to make $N$ sequential
evaluations of $F$, one would need to pad the initial trajectory $v(0,s)$, i.e., specify
$s$ over the interval $0 \le s \le Ns_1+s_2$.   In the case of the one-dimensional ODE, however,
$W(v)$ is always evaluated at a convex combination $v=\theta v_0+(1-\theta)Ju^*$, so the
relaxation times are not compounded. 

Notably, the 2D NSE considered in the paper is only a prototype example of a dissipative evolutionary equation; the results presented in this
work can be equally extended to other dissipative evolution equations
(cf. \cite{JMST,JST1,JST2}).

\section{Background and preliminaries} \label{sec2}
The two-dimensional incompressible Navier-Stokes equations (NSE)
\begin{equation}\label{NSES}
\begin{aligned}
&\frac{\partial u}{\partial s} -
\nu \Delta u  + (u\cdot\nabla)u + \nabla p = \mF \\
&\text {div} u = 0 \\
& \int_{\Omega} u\,  dx =0 \;,\qquad \int_{\Omega} \mF\,  dx =0 \\
& u(0,x) = u_0(x),
\end{aligned}
\end{equation}
subject to periodic boundary conditions with the basic periodic-domain $\Omega =[0,L]^2$, can be written as
an evolution equation in a Hilbert space $H$   (cf.  \cite{CF88,Robinson,T97})
%
\begin{equation}\label{NSE}
\begin{aligned}
&\frac{d}{ds}u(s) + \nu Au(s) + B(u(s),u(s)) = f, \,\, \text{for}\, s >0,\\
& u(0)=u_0.
\end{aligned}
\end{equation}
Here $H$ is the closure of $\calV$ in  $(L^2_{\rm{per}}(\Omega))^2$,  where
$$
\calV=\{\varphi:\, \varphi\,\, \text{is}\,\, \mathbb{R}^2-{\hbox{valued trigonometric polynomials}}, \,\, \nabla \cdot \varphi = 0, \, \text{ and} \, \int_{\Omega} \varphi(x)\, dx =0  \}\;.
$$
The Stokes operator $A$, the bilinear operator $B$, and force $f$ are defined as
\begin{equation}
\label{nabla1}
A=-\cP\Delta=-\Delta \;, \quad  B(u,v)=\cP\left( (u \cdot \nabla) v \right)\;,\quad f=\cP \mF\;,
\end{equation}
where $\cP$ is the Helmholtz-Leray orthogonal projector from $(L^2(\Omega))^2$ onto $H$.
We denote $|\cdot|=\|\cdot\|_{L^2}$ and $\|u\|=|A^{1/2}u|$, with
fractional powers of $A$ defined by $A^\alpha \varphi_j=\lambda_j^\alpha \varphi_j$, $\alpha \in \bR$, where
$\{\varphi_j\}$ is an othornormal basis for $H$ consisting of eigenfunctions of $A$ corresponding
to eigenvalues $\{\lambda_j\}$ satisfying
$$
0<\lambda _1=\left(\frac{2\pi}{ L}\right)^2
\leq \lambda _2\leq\lambda_3 \le \cdots \;.
$$

It is well known that for a time independent forcing term $f\in H$, the two-dimensional NSE \eqref{NSE} is globally well-posed, and that it has a global attractor
\begin{equation}\label{attractordef}
\mathcal{A}=\{u_0\in H: \, \exists \, \text{ a solution}\, u(s,u_0)\, \text{of \eqref{NSE}} \, \, \forall \ s\in\mathbb{R},\,\, \sup_{s\in \bR} \|u(s)\|<\infty\}\;.
\end{equation}
Moreover, it is also well known that
\begin{equation}\label{Grashof}
\mathcal{A} \subset \{u\in D(A^{1/2}): \|u\|\leq  G\nu\kappa_0\},
\end{equation}
where $\k0=\lambda_1^{1/2}=2\pi/L$ and $G=|f|/(\nu^2\lambda_1)$ is the Grashof
number, a dimensionless parameter which plays the role of the Reynolds number in turbulent flows \cite{CF88,FMRT,Hale,Robinson,T97}.
It is also proved in \cite{FJLRYZ} that
$$
|Au| \le c\nu\k0^2G^3\;,\quad  |A \frac{du}{ds}| \le c\nu^2\k0^4G^7 \quad \forall \ u \in \AAA \;.
$$

 For a subset $\MMM \subset (L^2_{\text{per}}(\Omega))^2$, we denote $\dot \MMM=\{\varphi \in \MMM:  \int_\Omega \varphi(x) \ dx =0\}$.
 Let $J=J_h$ be a finite-rank linear interpolant operator,
 $$
J:(\dot H_{\text{per}}^2(\Omega))^2 \to (\dot H_{\text{per}}^1(\Omega))^2\;,
$$  which approximates the identity in the
 sense that for every $w \in (\dot {H}^2(\Omega))^2$
\begin{align}
|Jw-w| &\le c_1h|\nabla w|+c_2h^2|\Delta w| \label{ape1} \\
|\nabla(Jw-w)|&\le \tilde{c}_1|\nabla w|+\tilde{c}_2h|\Delta w|  \label{ape2}\;,
\end{align}
where $h$ represents the spatial resolution of the interpolant operator $J$. Observe that the above approximate identity inequalities imply that $J:(\dot H^2)^2 \to(\dot H^1)^2$ is a bounded linear operator. Indeed, we have from \eqref{ape1} that for every $w \in (\dot{H}^2)^2$
$$
|Jw| \le |w| + c_1h|\nabla w| + c_2h^2|\Delta w| \sim \|w\|_{H^2}\;,
$$
while from \eqref{ape2}
$$
|\nabla Jw| \le |\nabla w| + \tc_1|\nabla w| + \tc_2h|\Delta w| \sim \|w\|_{H^2}\;.
$$
Adding these two inequalities implies
$$
\|Jw\|_{\dot H^1} \le c_J \|w\|_{\dot H^2} \;, \quad \text{for every} \ w \in (\dot{H}^2)^2\;.
$$

Such an interpolant operator could be defined in terms of Fourier modes, nodal values, volume elements,
or finite elements (see, e.g.,  \cite{AOT,CJTi1,FP,FTfinelt,FTi,HTi, JT93} and references therein).
Let $X$ denote the Banach space $C_b^1(\mathbb{R},J(\dot{H}^2_{\rm{per}}(\Omega))^2)$.   We endow $X$ with the norm
\begin{equation}\label{X}
\|v\|_X=\|v\|_{X^0}+\sup_{s\in \bR}\frac{ \|v'(s)\|}{\nu^2\kappa_0^3}\;,
 \quad \text{where}\quad
 \|v\|_{X^0}=\sup_{s \in \bR} \frac{\|v(s)\|}{\nu\kappa_0} \;.
  \end{equation}

Now consider for any $v\in X$
the associated ``feedback control" system
\begin{equation} \label{weqn}
\frac{dw}{ds}+Aw+B(w,w)=f-\mu\nu\kappa_0^2 (J_hw-v)\;, \quad w \in H\;.
\end{equation}
The following is proved in \cite{FJKrT2}.
\begin{theorem}\label{mainthm} \hfill\break
\begin{enumerate}
\item Let $\rho > 0$ and $\mu\gtrsim \rho^2$, $h \lesssim 1/\sqrt{\mu}$.  Then for every $v \in \BBB_X^\rho(0)$ equation \eqref{weqn} has a unique solution $w \in Y=C_b^1(\bR,D(A))$, defining a globally Lipschitz map
$W:\BBB_X^\rho(0)\to Y$, by $W(v)=w$,
\item $\| J u\|_X \le R \sim G^7$ for all  $u(\cdot) \subset \AAA$,
\item let $v \in \BBB^{4R}_X(0)$,  then $JW(v)=v$ if and only if $W(v)(\cdot) \subset \AAA$.
\end{enumerate}
\end{theorem}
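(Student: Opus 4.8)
The plan is to establish the three assertions in order, with part (1) absorbing essentially all of the analytic difficulty and parts (2)--(3) following by short comparison arguments. Throughout, the governing philosophy is that the feedback term $-\mu\nu\k0^2(J_hw-v)$ in \eqref{weqn} supplies enough extra dissipation, for $\mu$ large and $h$ small, to both tame the quadratic nonlinearity and track the data $v$.

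For part (1), I would first obtain \emph{a priori} estimates exploiting this enhanced dissipation. Taking the $H$-inner product of \eqref{weqn} with $w$, using the orthogonality $\langle B(w,w),w\rangle=0$ and $\langle J_hw,w\rangle=|w|^2-\langle w-J_hw,w\rangle$, gives
\begin{equation*}
\tfrac{1}{2}\frac{d}{ds}|w|^2+\nu\|w\|^2+\mu\nu\k0^2|w|^2
=\langle f,w\rangle+\mu\nu\k0^2\langle w-J_hw,w\rangle+\mu\nu\k0^2\langle v,w\rangle.
\end{equation*}
The interpolation error $\langle w-J_hw,w\rangle$ is estimated by \eqref{ape1}, and the scaling hypotheses $\mu\gtrsim\rho^2$, $h\lesssim 1/\sqrt{\mu}$ are precisely what is needed so that its $|\nabla w|$ and $h^2|\Delta w|$ contributions are absorbed by $\tfrac{\nu}{2}\|w\|^2$ and by the damping $\mu\nu\k0^2|w|^2$, leaving a dissipative differential inequality whose forcing is controlled by $\|v\|_X$. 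Because the error bound in \eqref{ape1} carries the second-order term $h^2|\Delta w|$, one cannot close purely at the $L^2$ level; I would therefore repeat the computation after pairing \eqref{weqn} with $Aw$, controlling the trilinear term $\langle B(w,w),Aw\rangle$ by the two-dimensional Ladyzhenskaya/Agmon inequalities and absorbing it through the largeness of $\mu$, thereby obtaining uniform-in-time bounds in $D(A)$.

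With such bounds in hand, I would construct the solution on all of $\bR$ by a Galerkin scheme combined with a pullback (backward-in-time) limit: solve \eqref{weqn} on $[-n,\infty)$ with zero data at $s=-n$, use the uniform dissipative bounds to extract a subsequential limit as $n\to\infty$, and verify it is a bounded solution lying in $Y=C_b^1(\bR,D(A))$, the $C^1$-regularity and the $D(A)$-bound on $dw/ds$ being read off the equation once $w$ is bounded in $D(A)$. Uniqueness and the global Lipschitz property of $W$ follow from the \emph{same} mechanism applied to the difference of two solutions with data $v_1,v_2$: that difference solves a linear-type equation forced by $\mu\nu\k0^2(v_1-v_2)$, its trilinear terms are dominated using the $D(A)$-bounds and the large damping, and one reads off $\|W(v_1)-W(v_2)\|_Y\lesssim\|v_1-v_2\|_X$, with $v_1=v_2$ giving uniqueness. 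Part (2) is then a direct substitution: since $J$ is linear and time-independent, $(Ju)'=Ju'$, so applying $\|Jw\|_{\dot H^1}\le c_J\|w\|_{\dot H^2}$ to both $u$ and $u'$ and inserting the attractor bounds $|Au|\lesssim\nu\k0^2G^3$ and $|A\,du/ds|\lesssim\nu^2\k0^4G^7$ into \eqref{X} yields $\|Ju\|_X\lesssim G^7$, the time-derivative term furnishing the dominant power, which is the claimed $R\sim G^7$. Finally, part (3) is a comparison argument: writing $w=W(v)$, which solves \eqref{weqn}, the nudging term vanishes identically if and only if $J_hw=v$, i.e. $JW(v)=v$; if $JW(v)=v$ then \eqref{weqn} reduces to the NSE \eqref{NSE} and the bounded global solution $w\in Y$ lies in $\AAA$ by \eqref{attractordef}, while conversely, if $W(v)(\cdot)\subset\AAA$ then $w$ solves the NSE as well as \eqref{weqn}, and subtracting forces $J_hw=v$; the radius $4R$ only ensures, via part (2), that every such $Ju$ sits well inside $\BBB_X^{4R}(0)$.

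The main obstacle is squarely part (1): producing the bounded solution on the whole line with the stated regularity and closing the nonlinear estimates under the precise scalings $\mu\gtrsim\rho^2$, $h\lesssim 1/\sqrt{\mu}$. The delicate point is that the $h^2|\Delta w|$ term in \eqref{ape1} prevents closure at the pure $L^2$ level and couples the energy and enstrophy estimates, so the two must be balanced simultaneously; once that balance is achieved, uniqueness, Lipschitz dependence, and the pullback existence all follow from variants of the same calculation.
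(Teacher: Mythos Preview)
The paper does not prove Theorem~\ref{mainthm}; immediately before its statement the authors write ``The following is proved in \cite{FJKrT2}.''  It is quoted here as a black box on which the rest of the paper (the reduction to the one-dimensional ODE in $\theta$) is built, so there is no in-paper argument to compare your proposal against.

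That said, your outline is the standard route to results of this type and is almost certainly close in spirit to what appears in \cite{FJKrT2}: pair \eqref{weqn} with $w$ and with $Aw$, use \eqref{ape1}--\eqref{ape2} together with the scalings $\mu\gtrsim\rho^2$, $h\lesssim 1/\sqrt{\mu}$ to absorb the interpolation error into the enhanced damping, obtain uniform-in-$s$ bounds in $D(A)$, construct the global bounded solution by a pullback/Galerkin limit, and read off uniqueness and the Lipschitz property of $W$ from the same estimate applied to the difference of two solutions.  Your treatment of (ii) via the $J:(\dot H^2)^2\to(\dot H^1)^2$ bound and the attractor estimates on $|Au|$, $|A\,du/ds|$ is exactly right, and your argument for (iii)---that $JW(v)=v$ kills the feedback term so $w=W(v)$ becomes a bounded global NSE solution, while conversely a trajectory in $\AAA$ solves both \eqref{NSE} and \eqref{weqn}, forcing $J_hw=v$ upon subtraction---is the intended one.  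The only caution is that closing the $D(A)$ estimate with a \emph{general} interpolant $J_h$ (not assumed to commute with $A$ or to be an orthogonal projector) requires some care in handling $\langle J_hw,Aw\rangle$; this is where the full strength of both \eqref{ape1} and \eqref{ape2} is used.
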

Let $u^*$ be a steady state of the NSE.   In \cite{FJKrT2} the term {\it determining form}
was introduced for the following evolution equation
\begin{equation}\label{detform}
\frac{dv}{d\tau}=F(v)=-\|v-JW(v)\|_{X^0} ^2\ (v-Ju^*)\;, \quad v \in \BBB_{X}^{3R}(Ju^*)\subset X \;,
\end{equation}
which is an ordinary differential equation in the true sense, i.e. $F$ is globally Lipschitz in
$\BBB_{X}^{3R}(Ju^*)$, which is a positively invariant set for \eqref{detform} (here $\mu$, $h$ and
$W$ are determined by $\rho=4R$).
Moreover, by (ii),(iii) the
set of steady states of \eqref{detform} is precisely
$\{Ju(\cdot)$, where $u(\cdot) \subset \AAA \}$.  In fact, it was shown in \cite{FJKrT2} (and we will repeat this proof later)
that every solution of \eqref{detform} approaches
a steady state, as $\tau\to \infty$.
Thus, trajectories in the global attractor of the NSE
are readily recognized/realized/identified by the long-time behavior of the determining form.
Note that at each ``evolutionary time" instant $\tau$
we may write $v(\tau)=v(\tau,s)$, $-\infty < s < \infty$, just as for the NSE written in the form \eqref{NSE},
at each instant $s$ one may write $u(s)=u(s,x)$ to denote the dependence on the suppressed spatial variable $x$.

This type of determining form and its properties have also been established for
the subcritical surface quasigeostrophic equation (SQG), a damped, driven nonlinear Schr\"odinger equation (NLS), and a damped, driven Korteveg-de Vries  equation (KdV)
\cite{JMST,JST1,JST2}.  The analysis used to prove analogs of Theorem \ref{mainthm} for these systems differ from that for the NSE.
The approach for the SQG involves the Littlewood-Paley decomposition and Di Giorgi techniques in order to obtain $L^p$ estimates for \eqref{weqn} over the full subcritical range.
For the weakly dissipative NLS and KdV the analysis uses of certain compound functionals
resulting in different spaces $X$ and $Y$ in each case.  An earlier type of determining form in which trajectories in
the global attractor the 2D NSE were identified with traveling wave solutions $v(\tau,s)=v(0,\tau+s)$ was developed in \cite {FJKrT1}.  In this paper we focus on the dynamics of the determining form in \eqref{detform}, describing completely the basin of attraction for each steady state, and distinguishing between lower bounds on the rate of convergence toward $Ju^*$ and the rate toward any other trajectory in $\AAA$.

\section{One-dimensional ODE determining the global dynamics of the NSE}\label{sec3}

In this section we investigate the dynamical behavior of the determining form \eqref{detform}.  Remarkably, the global dynamics of the NSE, i.e., the trajectories on the global attractor of the NSE,
are precisely the liftings of the steady state solutions of a one-dimensional  ODE  \eqref{titi6}, below,
which will be referred to as the ``characteristic parametric
determining form".

The determining form \eqref{detform} is equivalent to
\begin{equation} \label{titi1}
\begin{split}
\frac{d}{d\tau}(v - Ju^*) & = -\| v - JW(v) \| _{X^0} ^2 (v - Ju^*) \;,\\
 v(0) &= v_0 \in \BBB_X ^{3R}(Ju^*) \;,
\end{split}
\end{equation}
whose solution can be written as
\begin{equation} \label{titi2}
v(\tau) - Ju^ * = \theta (\tau) (v_0 - Ju^*) ,
\end{equation}
where
\begin{equation} \label{titi3}
\begin{split}
\theta (\tau) &=  \exp \left( - \int _0 ^ \tau \| v(\sigma) - J W(v(\sigma)) \| _{X^0} ^ 2 \, d\sigma \right) \\
 \theta(0) &= 1, \text{ and } \theta(\tau) \in [0,1].
\end{split}
\end{equation}
Observe that
\begin{equation} \label{titi4}
\frac{d\theta}{d\tau} = - \theta \| v(\tau) - JW(v(\tau)) \| _{X^0} ^2 .
\end{equation}
Thanks to \eqref{titi2} we have
\begin{equation} \label{titi5}
v(\tau) = \theta(\tau) v_0 + (1-\theta (\tau)) Ju^*
\end{equation}
and combined with \eqref{titi4} we have a one-dimensional ODE for the characteristic determining parameter $\theta$:
\begin{equation} \label{titi6}
\begin{split}
\frac{d\theta}{d\tau} &=  -\theta \| \theta v_0 + (1-\theta) Ju^* - JW(\theta v_0 + (1-\theta)J(u^*)) \| _{X^0} ^2 =: \Phi(\theta;v_0,u^*) \\
 \theta(0) &= 1.
\end{split}
\end{equation}
We term equation \eqref{titi6} the characteristic parametric determining form.

Observe that \eqref{titi5} states that the solution of the determining form \eqref{detform} is a convex combination of the given initial data $v_0$ and the steady state projection $Ju^*$, where  the convexity parameter is the characteristic determining parameter. 
Moreover, the evolution of \eqref{detform} is equivalent to the evolution of the one parameter ODE \eqref{titi6}, the characteristic parametric determining form.  Note also that \eqref{titi5} gives a separation of variables, i.e.
\begin{equation*}
\begin{split}
v(\tau,s) &= \theta(\tau)v_0(s) - (1-\theta(\tau)) Ju^* \\
\text{for} \quad \tau\ &\geq 0 , ~ s\in \RR, \text{ where } v_0 \in \BBB_X ^{3R}(Ju^*).
\end{split}
\end{equation*}
\subsection{The behavior of the $\theta(\tau)$: Lower bounds on convergence rate}

\begin{theorem}\label{titithm}
The limit $\lim _{t\rightarrow \infty} \theta(\tau) = \bar{\theta}$ exists and
\begin{enumerate}
\item $\bar{\theta} = 0$ if and only if
\[ \int _0 ^ \infty \| v(\sigma ) - J W(v(\sigma)) \| _{X^0} ^ 2 \, d\sigma = \infty \quad \text{and} \quad
v(\tau) \to \bar{v} = Ju^*\;, \text{ as } \tau \to \infty\;.\]
\item $\bar{\theta} \in (0,1]$ if and only if
\[ \int _0 ^ \infty \| v(\sigma ) - J W(v(\sigma)) \| _{X^0} ^ 2 \, d\sigma < \infty\;. \]
Moreover, in this case $\bar v=\bar\theta v_0+(1-\bar \theta)Ju^*$ satisfies $W(\bar v)(\cdot) \subset \cA$.
\item $\Phi(\bar\theta;v_0,u^*)=0$ if and only if $\bar\theta$ satisfies (i) or (ii); where
$\Phi(\bar\theta;v_0,u^*)$ is given in \eqref{titi6}.  The zeros, $\bar \theta$, of
$\Phi(\theta;v_0,u^*)$ are called characteristic determining values.
\item If $\bar{\theta} = 0$ then
\begin{equation}\label{thalf}
\theta(\tau) \geq \frac{1}{(1+2\tc^2\| v_0 - Ju^*\|_{X} ^2 \tau) ^{1/2}}\;, \quad \tau\geq 0\;,
\end{equation}
\begin{equation}\label{thalfX}
\| v(\tau) - Ju^* \| _X \geq \frac{\|v_0 - Ju^* \|_{X}}{(1+2\tc^2\|v_0 - Ju^* \| ^2 _{X} \tau ) ^{1/2}}\;, \quad \tau \geq 0
\end{equation}
and
\begin{equation}\label{thalfXX}
\| v(\tau) - Ju^* \| _{X^0} \geq \frac{\|v_0 - Ju^* \|_{X^0}}{(1+2\tc^2\|v_0 - Ju^* \| ^2 _{X} \tau ) ^{1/2}}\;, \quad \tau \geq 0.
\end{equation}
where $\tc$ is a constant depending only on $c_J$ and the Lipschitz constant of $W$.

\item If $\bar{\theta} \in (0,1]$ then $\bar{v} = v_0 \bar{\theta} + (1-\bar{\theta} )Ju^*$ and
\begin{equation} \label{whole}
\theta(\tau) - \bar{\theta} \geq \frac{1-\bar{\theta}}{1+\tc^2\|v_0 - Ju^* \| ^2 _{X} \tau }\;,
\quad \tau \geq 0,
\end{equation}
\begin{equation}\label{wholeX}
\| v(\tau) - \bar{v} \| _X \geq \frac{(1-\bar{\theta})\|v_0 - Ju^*\|_{X}}{1+\tc^2\|v_0 - Ju^* \| ^2 _{X} \tau }\;, \quad \tau \geq 0
\end{equation}
and
\begin{equation}\label{wholeXX}
\| v(\tau) - \bar{v} \| _{X^0} \geq \frac{(1-\bar{\theta})\|v_0 - Ju^*\|_{X^0}}{1+\tc^2\|v_0 - Ju^* \| ^2 _{X} \tau }\;, \quad \tau \geq 0.
\end{equation}
\end{enumerate}
\end{theorem}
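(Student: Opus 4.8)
The plan is to build everything on the explicit representation \eqref{titi3}, the scalar ODE \eqref{titi4}, and a single Lipschitz estimate on the map $JW$. First I would settle the qualitative parts (i)--(iii). Because the integrand in \eqref{titi3} is nonnegative, $\theta(\tau)\in(0,1]$ is non-increasing, so $\bar\theta=\lim_{\tau\to\infty}\theta(\tau)$ exists by monotonicity. Formula \eqref{titi3} then gives at once $\bar\theta=0$ iff $\int_0^\infty\|v(\sigma)-JW(v(\sigma))\|_{X^0}^2\,d\sigma=\infty$, which is (i) (the convergence $v(\tau)\to Ju^*$ following from \eqref{titi5} as $\theta\to0$), and the complementary case gives (ii). For the ``moreover'' in (ii), note from \eqref{titi5} that $v(\tau)\to\bar v:=\bar\theta v_0+(1-\bar\theta)Ju^*$ in $X$; since $W$ is continuous by Theorem \ref{mainthm}(i), the integrand converges to $\|\bar v-JW(\bar v)\|_{X^0}^2$, and a convergent integral of a continuous nonnegative integrand that possesses a limit forces that limit to vanish, whence $JW(\bar v)=\bar v$ and $W(\bar v)(\cdot)\subset\cA$ by Theorem \ref{mainthm}(iii). (That $\bar v\in\BBB_X^{4R}(0)$, needed to invoke (iii), holds because $v_0$ and $Ju^*$ both lie in that convex ball, using $\|Ju^*\|_X\le R$.) Part (iii) is then immediate, since $\Phi(\bar\theta;v_0,u^*)=-\bar\theta\|\bar v-JW(\bar v)\|_{X^0}^2$ vanishes in case (i) because $\bar\theta=0$ and in case (ii) because $\|\bar v-JW(\bar v)\|_{X^0}=0$; and every limit $\bar\theta$ falls into exactly one of (i), (ii).

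The rate estimates (iv), (v) both hinge on the observation that in either regime the limit $\bar v$ is a genuine fixed point of $JW$, i.e. $JW(\bar v)=\bar v$. In case (ii) this was just shown; in the case $\bar\theta=0$ we have $\bar v=Ju^*$, and since $u^*$ is a steady state of the NSE the constant trajectory $u^*(\cdot)\subset\cA$ solves \eqref{weqn} with $v=Ju^*$, so $W(Ju^*)=u^*(\cdot)\subset\cA$ and hence $JW(Ju^*)=Ju^*$ by Theorem \ref{mainthm}(iii). Using that $W$ is Lipschitz from $X$ to $Y$ and that $J$ is bounded from $(\dot H^2)^2$ into $(\dot H^1)^2$ (so $JW$ is Lipschitz from $X$ into $X^0$ with a constant controlled by $c_J$ and the Lipschitz constant of $W$), and writing $v=\theta v_0+(1-\theta)Ju^*$, I would estimate, with $\tc$ the resulting constant,
\begin{equation*}
\|v-JW(v)\|_{X^0}=\|(v-JW(v))-(\bar v-JW(\bar v))\|_{X^0}\le \tc\,\|v-\bar v\|_X=\tc\,(\theta-\bar\theta)\,\|v_0-Ju^*\|_X,
\end{equation*}
using $v-\bar v=(\theta-\bar\theta)(v_0-Ju^*)$, $\|\cdot\|_{X^0}\le\|\cdot\|_X$, and $\theta\ge\bar\theta$. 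Substituting into \eqref{titi4} yields the scalar Riccati-type inequality
\begin{equation*}
\frac{d\theta}{d\tau}\ge -\tc^2\,\theta\,(\theta-\bar\theta)^2\,\|v_0-Ju^*\|_X^2.
\end{equation*}

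The remaining step is to integrate this inequality in each regime, which I would do by passing to a linear inequality for a power of $\theta-\bar\theta$. When $\bar\theta=0$ it reads $\theta'\ge-\tc^2\|v_0-Ju^*\|_X^2\theta^3$; setting $u=\theta^{-2}$ gives $u'\le 2\tc^2\|v_0-Ju^*\|_X^2$, and integrating from $\theta(0)=1$ produces $\theta^{-2}\le 1+2\tc^2\|v_0-Ju^*\|_X^2\tau$, i.e. \eqref{thalf}. When $\bar\theta\in(0,1]$, bounding $\theta\le1$ gives $\theta'\ge-\tc^2\|v_0-Ju^*\|_X^2(\theta-\bar\theta)^2$; setting $u=(\theta-\bar\theta)^{-1}$ gives $u'\le\tc^2\|v_0-Ju^*\|_X^2$, and integrating from $\theta(0)-\bar\theta=1-\bar\theta$ yields $\theta-\bar\theta\ge(1-\bar\theta)/(1+\tc^2\|v_0-Ju^*\|_X^2(1-\bar\theta)\tau)$, which weakens to \eqref{whole} since $1-\bar\theta\le1$. (The degenerate value $\bar\theta=1$ forces $\theta\equiv1$ and both sides vanish.)

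Finally, the norm estimates \eqref{thalfX}, \eqref{thalfXX}, \eqref{wholeX}, \eqref{wholeXX} follow by multiplying the $\theta$-bounds by $\|v_0-Ju^*\|_X$ or $\|v_0-Ju^*\|_{X^0}$, using the identities $v(\tau)-Ju^*=\theta(\tau)(v_0-Ju^*)$ from \eqref{titi2} and $v(\tau)-\bar v=(\theta(\tau)-\bar\theta)(v_0-Ju^*)$. I expect the main obstacle to be the Lipschitz estimate above: its validity depends on knowing that $\bar v$ is \emph{exactly} a fixed point of $JW$ so that the constant term cancels, which in turn rests on the convergent-integral argument with continuity of $W$ in case (ii), and on identifying $W(Ju^*)$ with the steady trajectory $u^*$ when $\bar\theta=0$. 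Everything downstream is then a routine integration of scalar differential inequalities.
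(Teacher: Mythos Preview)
Your proof is correct and follows essentially the same line as the paper's own argument: monotonicity of $\theta$ for existence of the limit, the exponential formula \eqref{titi3} for the dichotomy (i)/(ii), continuity of $W$ plus finiteness of the integral to force $JW(\bar v)=\bar v$, and the Lipschitz bound on $JW$ to pass from the ODE \eqref{titi4} to a scalar differential inequality in $\theta$ that integrates explicitly. The only cosmetic difference is that the paper treats (iv) and (v) separately (first using $JW(Ju^*)=Ju^*$, then $JW(\bar v)=\bar v$), whereas you unify them into a single inequality $\|v-JW(v)\|_{X^0}\le\tc(\theta-\bar\theta)\|v_0-Ju^*\|_X$ and specialize; you are also slightly more careful in checking that $\bar v\in\BBB_X^{4R}(0)$ before invoking Theorem~\ref{mainthm}(iii).
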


As a consequence of the above theorem we have the following corollary which states that the
trajectories in the global attractor of the NSE are determined in a unique fashion by the characteristic determining values, i.e., zeros of a
real-valued function of a real variable, the vector field of the parametric determining form, $\Phi(\theta;v_0,u^*)$.

\begin{corollary} Let $v_0 \in \BBB^{3R}_X(Ju^*)$, and let $\bar\theta$ satisfy 
$\Phi(\bar\theta;v_0,u^*)=0$.  \hfill\break Then $w(\cdot)=W(v_0 \bar{\theta} + (1-\bar{\theta} )Ju^*)(\cdot) \subset \cA$.  Conversely, for every $u(\cdot) \subset \cA$, there exists $v_0 \in \BBB_X^{3R}(Ju^*)$ and $\bar\theta \in [0,1]$ such that $\Phi(\bar\theta, v_0,u^*)=0$ and \hfill\break
$u(\cdot)=W(v_0 \bar{\theta} + (1-\bar{\theta} )Ju^*)(\cdot)$.
\end{corollary}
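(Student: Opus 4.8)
The plan is to prove the two implications separately, reducing each to Theorem \ref{mainthm} together with the explicit factored form of $\Phi$ in \eqref{titi6}. Throughout set $\bar{v} = \bar{\theta} v_0 + (1-\bar{\theta})Ju^*$, so that the hypothesis $\Phi(\bar\theta;v_0,u^*)=0$ reads
\[
\Phi(\bar{\theta};v_0,u^*) = -\bar{\theta}\,\|\bar{v} - JW(\bar{v})\|_{X^0}^2 = 0 .
\]

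For the forward direction I would first note that, since $\|\cdot\|_{X^0}$ is a genuine norm on $X$ (vanishing only on the zero trajectory), the product above is zero precisely when $\bar{\theta}=0$ or $JW(\bar{v})=\bar{v}$. In the second case I would check that $\bar{v}$ lies in the domain where Theorem \ref{mainthm}(iii) applies: from $\bar{v}-Ju^*=\bar{\theta}(v_0-Ju^*)$ we get $\|\bar{v}-Ju^*\|_X=\bar{\theta}\|v_0-Ju^*\|_X\le 3R$, and since $\|Ju^*\|_X\le R$ by (ii) this gives $\|\bar{v}\|_X\le 4R$, so Theorem \ref{mainthm}(iii) yields $W(\bar{v})(\cdot)\subset\cA$ at once. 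In the first case $\bar{v}=Ju^*$; here I would observe that the constant trajectory $u^*(\cdot)$ solves the feedback system \eqref{weqn} with $v=Ju^*$, because the control term $J_hu^*-Ju^*$ vanishes and $u^*$ is a steady state of the NSE, so by the uniqueness in Theorem \ref{mainthm}(i) we have $W(Ju^*)=u^*(\cdot)$, which lies in $\cA$. In either case $w(\cdot)=W(\bar{v})(\cdot)\subset\cA$, as claimed. This is essentially the content of combining items (i)--(iii) of Theorem \ref{titithm}, but the direct argument above applies to every zero of $\Phi$, not only to the limit of the flow started at $\theta(0)=1$.

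For the converse I would exhibit one explicit choice valid for every trajectory. Given $u(\cdot)\subset\cA$, put $v_0=Ju(\cdot)$ and $\bar{\theta}=1$, so $\bar{v}=v_0=Ju$. Membership $v_0\in\BBB_X^{3R}(Ju^*)$ follows from (ii) applied to both $u$ and $u^*$, giving $\|Ju-Ju^*\|_X\le 2R<3R$. As in the forward direction, $u(\cdot)$ solves \eqref{weqn} with $v=Ju$ because the feedback term then vanishes, and $u\in Y=C_b^1(\bR,D(A))$ by the a priori bounds on $\cA$ recorded after \eqref{Grashof}; hence uniqueness gives $W(Ju)=u(\cdot)$. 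Consequently $JW(\bar{v})=JW(Ju)=Ju=\bar{v}$, so $\Phi(1;v_0,u^*)=-\|Ju-JW(Ju)\|_{X^0}^2=0$ and $u(\cdot)=W(\bar{v})(\cdot)$, which proves the converse.

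The whole argument is short because the analytic content is already packaged into Theorem \ref{mainthm}; the only points needing care are the algebraic split of $\Phi(\bar{\theta})=0$ into the two cases $\bar{\theta}=0$ and $JW(\bar{v})=\bar{v}$ (the degenerate factor $\bar{\theta}$ forcing the separate steady-state identification $W(Ju^*)=u^*$) and the ball inclusions that make part (iii) applicable. The main, though mild, obstacle is the bookkeeping identification $W(Ju)=u$ for $u(\cdot)\subset\cA$: it rests simultaneously on the uniqueness of bounded solutions of \eqref{weqn} and on the regularity $u\in Y$ guaranteed by $\sup_s|Au|,\ \sup_s|A\,du/ds|<\infty$ on the attractor, and it is precisely what legitimizes the convenient choice $\bar{\theta}=1$, $v_0=Ju$.
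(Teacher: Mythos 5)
Your proof is correct and follows essentially the same route as the paper: the forward direction amounts to the case split $\bar{\theta}=0$ or $JW(\bar{v})=\bar{v}$ combined with Theorem \ref{mainthm}(iii) (precisely what the paper invokes via Theorem \ref{titithm}(iii) and Theorem \ref{mainthm}(iii)), and the converse uses the identical trivial choice $v_0=Ju$, $\bar{\theta}=1$. The only difference is that you spell out details the paper leaves implicit --- the ball inclusions making Theorem \ref{mainthm}(iii) applicable and the identification $W(Ju)=u$ via uniqueness of bounded solutions of \eqref{weqn} --- which is a useful elaboration but not a structurally different argument.
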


The proof of the first part of the corollary follows immediately from part (iii) of Theorem \ref{titithm} and part (iii) of Theorem \ref{mainthm}.  To prove the second part of the corollary
we take $v_0=Ju$ where $u(\cdot) \subset \AAA$ and $\bar\theta=1$, because in this case $v_0$ is a steady state of the determining form \eqref{titi1}.

\begin{remark} In fact for a given $u(\cdot)\subset \AAA$ there might be infinitely many
$v_0$ satisfying the second part of the proposition.  They must have the form
$$
v_0(\cdot)=\alpha Ju(\cdot) + (1-\alpha)Ju^* \quad \text{for some} \quad
\alpha \ge 1\;, \quad \text{with} \quad v_0 \in \BBB_X^{3R}(Ju^*)\;.
$$
In the above proof
we have shown the existence of at least one such $v_0$, which is  the trivial choice.
However, in the computational section, below, we take a nontrivial choice for $v_0$ to demonstrate the validity of the analytical results.
\end{remark}

Next we give the proof of Theorem \ref{titithm}.
\begin{proof}
The existence of the limit $\bar\theta$ follows from the fact that $\theta(t) \in [0,1]$ is monotonic and non-increasing.
Part (i) is immediate.

To prove part (ii) observe that since $\bar\theta > 0$, we have
\begin{align}\label{finiteint}
\int_0^ \infty \| v(\sigma)-JW(v(\sigma))\|_{X^0}^2 \ d\sigma < \infty \;.
\end{align}
 As a result
of \eqref{titi5} we conclude that
$$
v(\tau)=\theta(\tau)v_0 + (1-\theta(\tau))Ju^* \to \bar v=\bar\theta v_0 + (1-\bar\theta)Ju^*\;,
$$
as $\tau \to \infty$.
Since $J$ and $W$ are continuous maps we have $JW(v(\tau)) \to JW(\bar v)$, as $\tau \to \infty$.
Thus, the integrand in \eqref{finiteint}, $\| v(\sigma)-JW(v(\sigma))\|_{X^0}^2$, converges to $\| \bar v(\sigma)-JW(\bar v(\sigma))\|_{X^0}^2$, as
$\sigma \to \infty$.  Since the integral is finite we have
$$
\| v(\sigma)-JW(v(\sigma))\|_{X^0}^2 \to  0\;,
\quad \text{as} \quad \sigma \to \infty\;.
$$
As a consequence, we have $\bar v-JW(\bar v)= 0$.  Thanks to part (iii)
of Theorem \ref{mainthm}, $W(\bar v)(\cdot) \subset \cA$.

To prove (iv) observe that $Ju^* = JW(Ju^*)$, therefore
\begin{equation} \label{bigguy}
\begin{aligned}
\| v - JW(v) \| _{X^0} &= \| v - Ju^*  + JW(Ju^*) - JW(v) \| _{X^0} \\
& \le \|v-Ju^*\|_{X^0}+ \|JW(Ju^*)-JW(v)\|_{X^0} \\
(J:\dot H^2 \to \dot H^1 \text { bounded}) \qquad & \le \|v-Ju^*\|_{X^0}+c_J c \sup_{s \in \bR}|A(W(Ju^*)-W(v)| \\
& \le \|v-Ju^*\|_{X^0}+ c_J c \|W(Ju^*)-W(v)\|_Y \\
(W: X \to Y \text{ Lipschitz}) \qquad & \le \|v-Ju^*\|_{X^0}+ c_J c L_{\text{Lip}}\|Ju^*-v\|_X \\
(\|\cdot\|_X \text{ stronger than } \|\cdot \|_{X^0}) \qquad &\leq \tc \| v - Ju^* \|_{X} \\
\text{by \eqref{titi5}}\qquad & =\tc\theta\|v_0-Ju^*\|_{X}
\end{aligned}
\end{equation}
As a consequence of \eqref{bigguy}  and \eqref{titi4} we have
\[
\frac{d\theta}{d\tau} \geq -\theta ^3 \tc^2\| v_0 - Ju^* \| _{X} ^2 ; \quad \theta(0) = 1.
\]
We integrate to obtain \eqref{thalf}, then use \eqref{titi5} to get \eqref{thalfX} and \eqref{thalfXX}.

To prove (v) suppose $\bar{\theta}\in (0,1]$, and $\bar{v} = \bar{\theta}v_0 + (1-\bar{\theta })Ju^*$. Note that by part (ii) $\bar{v}=JW(\bar{v})$, so that $\| \bar{v} - JW(\bar{v}) \| _{X^0} = 0$. In view of this and $\eqref{titi4}$, we have
\begin{align*}
\frac{d}{d\tau}(\theta - \bar{\theta}) &= -\theta \| v - JW(v) \| _{X^0} ^2 \\
&= -\theta \left[ \| v - JW(v) \| _{X^0} - \| \bar{v} - JW(\bar{v} ) \| _{X^0} \right] ^2\;,
\end{align*}
while proceeding as in \eqref{bigguy}, we have
\begin{align*}
\left \vert \| v - JW(v) \| _{X^0} - \| \bar{v} - JW(\bar{v}) \| _{X^0} \right \vert &\leq \| (v - \bar{v}) - (JW(v) - JW(\bar{v}) ) \| _{X^0} \\
&\leq \tc\| v - \bar{v} \| _{X} \\
&\leq \tc (\theta - \bar{\theta} ) \| v_0 - Ju^* \| _{X}  \;.
\end{align*}
Since $\theta(\tau) \geq \bar{\theta}$ and $\theta(\tau) \in (0,1]$,
\begin{align*}
\frac{d}{d\tau} (\theta-\bar{\theta}) &\geq -\theta (\theta - \bar{\theta} ) ^2 \tc^2 \| v_0 -Ju^*  \| _{X} ^2  \\
&\geq -(\theta - \bar{\theta})^2 \tc^2\| v_0 - Ju^* \| _{X} ^2\;,
\end{align*}
with $\theta(0) = 1$. Once again, integrating yields \eqref{whole}, and applying \eqref{titi5}
gives \eqref{wholeX} and  \eqref{wholeXX}.
\end{proof}

\section{Accelerating the convergence rate} \label{sec4}
Following the proof of Theorem \ref{mainthm}  in \cite{FJKrT2} one can modify
the determining form slightly and prove similar statements for
the ODE (a modified determining form)
\begin{equation}\label{detform3}
\frac{dv}{d\tau}=-\|v-JW(v)\|_{X^0}\ (v-Ju^*)\;, \quad v(0)=v_0 \;.
\end{equation}
The solution to \eqref{detform3} can be expressed through the convexity parameter
$$
\tilde\theta(\tau)=\exp\left(\int_0^\tau -\|v(s)-JW(v(s))\|_{X^0} \ ds \right) $$
satisfying
$$
\frac{d \tilde\theta}{d\tau}=-\tilde\theta \|v(s)-JW(v(s))\|_{X^0} \;.
$$
Due to the reduced power on the norm, one can then follow the proof of Theorem \ref{titithm}, to derive faster, but still algebraic, lower bounds on the rates of convergence in analogs of \eqref{thalf}-\eqref{wholeX}.

For yet another parametric ODE
\begin{align} \label{etaeqn}
\frac{d\eta}{d\tau} = -\|v(\tau)-JW(v(\tau))\|_{X^0}\;, \quad  \eta(0)=1\;,
\end{align}
where $v(\tau)=\eta(\tau)v_0+(1-\eta(\tau))Ju^*$, one can again follow the proof of Theorem \ref{titithm} to show an exponential lower bound on the rate of convergence to $\bar \eta$
\begin{align}\label{exprate}
\eta(\tau)-\bar \eta \ge (1-\bar \eta) e^{-c\tau}
\end{align}
for both cases $\bar \eta=0$ and $\bar \eta \in (0,1)$.
The ``determining form" in the space of trajectories associated with \eqref{etaeqn} is
\begin{equation}\label{detform4}
\frac{dv}{d\tau}=-\|v-JW(v)\|_{X^0}\ (v_0-Ju^*)\;, \quad v(0)=v_0 \;,
\end{equation}
which is, strictly speaking, not an ODE, due to the involvement of the initial condition $v_0$ in the vector field. The solution to \eqref{detform4}, however, follows the same trajectory as that for
\eqref{detform3} and \eqref{detform}, only in each case the parametrization is different.

\section{Further properties of solutions of the determining form} \label{sec5}
\begin{proposition}\label{shifty}
Suppose $\tilde{v} \in \BBB _X ^{3R} (Ju^*)$. Denote by $\tilde{v}_\sigma (s) = \tilde{v}(s+\sigma)$, then
\begin{align}\label{shifteqn}
 W(\tilde{v} _\sigma) (s) = W(\tilde{v})(\sigma + s)\;.
\end{align}
\end{proposition}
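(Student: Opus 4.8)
The plan is to exploit the \emph{autonomy} of the feedback-control system \eqref{weqn}: since the forcing $f$ is independent of $s$ and the only $s$-dependence on the right-hand side enters through the driving signal $v(s)$, a time shift of the signal must produce the same time shift of the solution. First I would set $w := W(\tilde v)$, so that $w \in Y = C_b^1(\bR, D(A))$ is, by part (i) of Theorem \ref{mainthm}, the \emph{unique} bounded solution of \eqref{weqn} with $v = \tilde v$. Then I would introduce the shifted function $\tilde w(s) := w(s+\sigma)$ and aim to show that $\tilde w = W(\tilde v_\sigma)$; the claimed identity \eqref{shifteqn} is then just the rewriting $W(\tilde v_\sigma)(s) = \tilde w(s) = w(s+\sigma) = W(\tilde v)(s+\sigma)$.

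The main computation is to verify that $\tilde w$ solves \eqref{weqn} with driving signal $\tilde v_\sigma$. Evaluating \eqref{weqn} (for $v = \tilde v$) at the time $s+\sigma$, and using $\frac{d}{ds}\tilde w(s) = \frac{dw}{ds}(s+\sigma)$, $A\tilde w(s) = Aw(s+\sigma)$, $B(\tilde w(s),\tilde w(s)) = B(w,w)(s+\sigma)$ (here $A$ and $B$ carry no $s$-dependence), together with $\tilde v_\sigma(s) = \tilde v(s+\sigma)$, yields
\begin{equation*}
\frac{d\tilde w}{ds}(s) + A\tilde w(s) + B(\tilde w(s),\tilde w(s)) = f - \mu\nu\kappa_0^2\bigl(J_h \tilde w(s) - \tilde v_\sigma(s)\bigr),
\end{equation*}
which is precisely \eqref{weqn} driven by $\tilde v_\sigma$. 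Before invoking uniqueness I would record the two membership checks. Because $Ju^*$ is a steady state and hence shift-invariant, and because every norm defining $\|\cdot\|_X$ in \eqref{X} is a supremum over $s \in \bR$, the shift $v \mapsto v_\sigma$ is an isometry of $X$; thus $\|\tilde v_\sigma - Ju^*\|_X = \|\tilde v - Ju^*\|_X \le 3R$, so $\tilde v_\sigma \in \BBB_X^{3R}(Ju^*) \subset \BBB_X^{4R}(0)$ and lies in the domain on which $W$ is defined. Likewise $\tilde w = w(\cdot + \sigma)$ has the same $Y$-norm as $w$, so $\tilde w \in Y$ is a \emph{bounded} solution.

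The conclusion then follows immediately from the uniqueness clause of Theorem \ref{mainthm}(i): both $\tilde w$ and $W(\tilde v_\sigma)$ are bounded solutions in $Y$ of the same system \eqref{weqn} with signal $\tilde v_\sigma$, so they must coincide, giving \eqref{shifteqn}. I do not expect a genuine obstacle here: the substantive ingredient is the uniqueness of bounded solutions already supplied by Theorem \ref{mainthm}, while the shift-invariance of the $X$- and $Y$-norms and the domain inclusion are routine. In essence the proposition is the statement that $W$ is the time-$\sigma$-equivariant solution operator of an autonomous evolution, and the only care needed is to confirm that the shifted signal remains inside the ball where $W$ is constructed.
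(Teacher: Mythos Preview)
Your argument is correct and is essentially identical to the paper's own proof: set $w=W(\tilde v)$, observe that the shifted function $w(\cdot+\sigma)$ solves \eqref{weqn} with driving signal $\tilde v_\sigma$ because the system is autonomous, and invoke the uniqueness in Theorem~\ref{mainthm}(i). If anything, your membership check is slightly more careful than the paper's, which simply records $\|\tilde v_\sigma\|_X=\|\tilde v\|_X$; your observation that $Ju^*$ is shift-invariant so that $\tilde v_\sigma$ remains in $\BBB_X^{3R}(Ju^*)\subset\BBB_X^{4R}(0)$ makes the domain inclusion explicit.
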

\begin{proof}
Let $w=W(\tilde{v})$ be the unique bounded solution of the equation
\begin{align}\label{E1}
\frac{d}{ds} w + \nu A w + B(w,w) = f - \mu (Jw-\tilde{v}) \;.
\end{align}
Notice that $\|\tilde{v}_\sigma\|_X=\|\tilde{v}\|_X < 3R$.
Since \eqref{E1} is autonomous it is clear that if $w(s)$ corresponds to $\tilde{v}(s)$ then $w(s+\sigma)$ is the unique bounded solution of \eqref{weqn} corresponding to $v(s)=\tilde{v} (s+\sigma) = \tilde{v}_\sigma (s)$, and so \eqref{shifteqn} follows.
\end{proof}
\begin{corollary}\label{noway}
If $\tilde{v} \in \BBB _X ^\rho (Ju^*)$ is periodic with respect to $s$ with period $P >  0$, then $W(\tilde{v})$ is also periodic with period $P$.  In particular, if $\tilde v$ is independent of $s$, then $w=W(\tilde v)$ is the
unique steady state of \eqref{E1}.
\end{corollary}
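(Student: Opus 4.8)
The plan is to obtain both claims as immediate consequences of the shift-equivariance identity \eqref{shifteqn} of Proposition \ref{shifty}, without revisiting the feedback system \eqref{E1} directly. The key observation is that $s$-periodicity of $\tilde v$ with period $P$ is precisely the condition $\tilde v_P = \tilde v$, where $\tilde v_\sigma(s) = \tilde v(s+\sigma)$, and that shifting preserves the $X$-norm (as noted in the proof of Proposition \ref{shifty}), so that $\tilde v_\sigma$ stays in the ball on which $W$ is defined and $W(\tilde v_\sigma)$ makes sense.

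For the periodicity statement, I would apply \eqref{shifteqn} with $\sigma = P$ to get
\[ W(\tilde v)(s+P) = W(\tilde v_P)(s) = W(\tilde v)(s), \qquad s \in \RR, \]
where the second equality is exactly the hypothesis $\tilde v_P = \tilde v$. This says that $w = W(\tilde v)$ satisfies $w(s+P) = w(s)$ for all $s$, i.e. $w$ is $P$-periodic, which is the first assertion.

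For the steady-state statement, I would note that when $\tilde v$ is independent of $s$ the equality $\tilde v_\sigma = \tilde v$ holds for \emph{every} $\sigma \in \RR$, not merely for a single period. Feeding this into \eqref{shifteqn} gives $W(\tilde v)(s+\sigma) = W(\tilde v)(s)$ for all $s,\sigma$; evaluating at $s=0$ shows that $W(\tilde v)(\sigma)$ is independent of $\sigma$, so $w = W(\tilde v)$ is constant in $s$. A constant that solves \eqref{E1} must then satisfy $\nu A w + B(w,w) = f - \mu(Jw - \tilde v)$, hence is a steady state of \eqref{E1}. I expect this part to be essentially bookkeeping; the only place warranting care is the uniqueness claim, where it is cleaner to deduce uniqueness of the steady state from the uniqueness of the \emph{bounded} solution supplied by Theorem \ref{mainthm}(i)—any steady state is a constant, hence a bounded solution of \eqref{E1}, and that equation admits exactly one such solution, namely $W(\tilde v)$—rather than to analyze the steady nonlinear problem on its own.
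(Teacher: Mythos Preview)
Your proof is correct and follows essentially the same route as the paper: both the periodic and constant cases are obtained directly from the shift identity \eqref{shifteqn} by observing that periodicity (resp.\ constancy) of $\tilde v$ is precisely $\tilde v_P=\tilde v$ (resp.\ $\tilde v_\sigma=\tilde v$ for all $\sigma$). Your explicit handling of uniqueness---any steady state is a bounded solution and hence must coincide with $W(\tilde v)$ by Theorem \ref{mainthm}(i)---is a small but welcome addition, matching what the paper relegates to the subsequent remark.
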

\begin{proof}
By the assumption we have $\tilde{v}(s+P) = \tilde{v}_P (s) = \tilde{v}(s)$. That is $\tilde{v}_P = \tilde{v}$. Hence, by Proposition \ref{shifty} $W(\tilde{v}_P)(s) = W(\tilde{v})(s+P)=W(\tilde{v})(s)$. Thus, $W(\tilde{v})$ is periodic with period $P$.  If $\tilde v (s)=\tilde v ^*=\text{constant}$, then $\tilde v(s+P)=\tilde v^*$ for every
$P \in \RR$.  Thus the unique bounded solution of \eqref{E1} satisfies $w(s+P)=w(s)$
for  all $s \in \RR$, and all $P \in \RR$, which implies $w=\text{constant}$ and as a result $W(\tilde v)$ is
a steady state of \eqref{E1}.
\end{proof}

\begin{remark} For an alternative proof of Corollary \ref{noway} in the case where
$\tilde{v} ^*$ is independent of $s$, consider the steady state version of  \eqref{weqn}.
As in the case of the NSE one can show that 
\eqref{weqn} with $\tilde{v}^*$ in the right-hand side has a steady state solution.
Since it is bounded with respect to $s$, it is the only solution (one can also show the uniqueness
directly).  Thus the steady state solution $w^*$ is the only bounded solution and hence $w^*=W(\tilde{v}^*)$.
\end{remark}

\begin{proposition}
Let $P^* >0$, and assume that there is no solution of the NSE (or the underlying equation), which is periodic with period $P^*$. Then all the periodic initial data $v_0\in \BBB _X ^{3R}(Ju^*)$ with period $P^*$ converge to $Ju^*$. Moreover, if $v_0$ is independent of $s$, then $v(t,s)$ is also independent of $s$, for all $t >0$, and the image of its limit $W(\bar v)$, as $t \to \infty$, is a steady state of the NSE.
\end{proposition}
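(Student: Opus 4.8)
The plan is to show that the limit $\bar\theta = \lim_{\tau\to\infty}\theta(\tau)$ furnished by Theorem \ref{titithm} must vanish, so that by part (i) of that theorem $v(\tau)\to Ju^*$. I would argue by contradiction: assume $\bar\theta\in(0,1]$ and extract from the limiting trajectory a genuine $P^*$-periodic solution of the NSE, in violation of the hypothesis.

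First I would record that, by the convex-combination formula \eqref{titi5}, the limit is $\bar v = \bar\theta v_0 + (1-\bar\theta)Ju^*$, which lies in $\BBB_X^{3R}(Ju^*)$ by convexity of the ball and, being a combination of the $P^*$-periodic function $v_0$ with the $s$-independent function $Ju^*$, is itself $P^*$-periodic. The key structural inputs are then: (a) by Theorem \ref{titithm}(ii), in the case $\bar\theta\in(0,1]$ one has $\bar v = JW(\bar v)$ with $W(\bar v)(\cdot)\subset\cA$; (b) because $JW(\bar v) = \bar v$, the feedback term $-\mu\nu\kappa_0^2(JW(\bar v)-\bar v)$ in \eqref{weqn} vanishes, so $w=W(\bar v)$ is in fact a bona fide global solution of the NSE; and (c) by Corollary \ref{noway}, periodicity of $\bar v$ transfers to $W(\bar v)$, so $W(\bar v)$ is $P^*$-periodic. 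Combining (b) and (c) produces a $P^*$-periodic NSE solution. To make this a genuine contradiction I would note that when $v_0$ is non-constant, so is $\bar v$ (since $\bar\theta>0$), and hence so is $W(\bar v)$ — were $W(\bar v)$ constant, $\bar v = JW(\bar v)$ would be constant too. Thus $W(\bar v)$ is a non-stationary $P^*$-periodic solution of the NSE, contradicting the hypothesis; therefore $\bar\theta = 0$ and $v(\tau)\to Ju^*$.

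For the ``Moreover'' clause I would observe that if $v_0$ is independent of $s$, then formula \eqref{titi5} exhibits $v(\tau) = \theta(\tau)v_0+(1-\theta(\tau))Ju^*$ as a convex combination of two $s$-independent functions, so $v(\tau,s)$ is independent of $s$ for every $\tau$, and the same holds for the limit $\bar v$. Applying the constant case of Corollary \ref{noway} to $\bar v$ shows $W(\bar v)$ is the unique steady state of \eqref{E1}; and since in either alternative ($\bar\theta=0$, giving $W(\bar v)=u^*$, or $\bar\theta\in(0,1]$, giving $W(\bar v)(\cdot)\subset\cA$ with vanishing feedback) $W(\bar v)$ solves the NSE, it is a steady state of the NSE.

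The one step requiring genuine care — and the place I expect to be the main obstacle — is the passage identifying the limit with an honest, non-stationary, periodic NSE solution, since otherwise the contradiction evaporates: a stationary solution is trivially $P^*$-periodic and never violates the hypothesis. This is precisely why the non-constancy of $v_0$ (hence of $\bar v$, hence of $W(\bar v)$) is essential in the first part, and why the constant case is not absorbed into it but is instead the content of the ``Moreover'' clause, where the limit's image is a steady state rather than necessarily $Ju^*$. Everything else reduces to routine bookkeeping with the representation \eqref{titi5} together with the already-established Theorems \ref{mainthm} and \ref{titithm} and Corollary \ref{noway}.
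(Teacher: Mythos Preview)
Your argument matches the paper's: use \eqref{titi5} to see that $\bar v$ is $P^*$-periodic, invoke Theorem \ref{titithm}(ii) so that $W(\bar v)(\cdot)\subset\cA$, and apply Corollary \ref{noway} to make $W(\bar v)$ a $P^*$-periodic trajectory in $\cA$, contradicting the hypothesis; the ``Moreover'' clause is likewise handled via \eqref{titi5} (the paper cites \eqref{titi2}). Your step (b) is redundant, since $W(\bar v)(\cdot)\subset\cA$ from (a) already makes $W(\bar v)$ a genuine NSE solution by the very definition \eqref{attractordef}; your explicit non-constancy check is more careful than the paper, which tacitly reads ``periodic with period $P^*$'' as excluding steady states.
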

\begin{proof}
Thanks to \eqref{titi5} we have $v(\tau) = \theta(\tau) v_0 + (1-\theta(\tau))Ju^*$. Since $v_0(s+P^*) = v_0(s)$ for all $s\in \RR$, then if $\theta(\tau) \rightarrow \bar{\theta} >0$, as $\tau\rightarrow \infty$, we have
$\bar v=\bar \theta v_0+(1-\bar \theta)Ju^*$ satisfying
$\bar{v} (s + P^*) = \bar{v}(s)$, where $W(\bar v)$ is a trajectory in $\AAA$, which is periodic with period $P^*$, a contradiction. Hence $\bar{\theta} = 0$.  The statement regarding initial data which are independent of $s$ follows from \eqref{titi2}.
\end{proof}
\begin{proposition}
The determining form \eqref{detform} does not have any traveling wave solutions.
\end{proposition}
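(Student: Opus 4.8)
The plan is to use the separation-of-variables formula \eqref{titi5}, which holds for \emph{every} solution of the determining form \eqref{detform}, to reduce the traveling-wave ansatz to a scalar functional equation that is incompatible with boundedness. By a traveling wave I mean a solution of the form $v(\tau,s)=V(s-c\tau)$ with a fixed profile $V\in X$ and a nonzero speed $c\neq 0$; a zero speed merely reproduces the steady states $Ju(\cdot)$, $u(\cdot)\subset\AAA$, already identified in Theorem~\ref{titithm}. First I would suppose, toward a contradiction, that such a $V$ and $c\neq 0$ exist, and set $v_0(s)=v(0,s)=V(s)$.

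Next I would combine the ansatz with \eqref{titi5}. Since $v(\tau,s)=\theta(\tau)v_0(s)+(1-\theta(\tau))Ju^*$ and $Ju^*$ is independent of both $s$ and $\tau$, writing $U:=V-Ju^*$ (which is bounded because $V\in X$) the two expressions for $v(\tau,s)$ force the functional identity
\[
U(s-c\tau)=\theta(\tau)\,U(s),\qquad s\in\RR,\ \tau\ge 0 .
\]
Taking the $\dot H^1$ norm $\|\cdot\|$ of both sides and setting $\phi(s):=\|U(s)\|$, this becomes the scalar relation $\phi(s-c\tau)=\theta(\tau)\phi(s)$, where $\phi$ is bounded on $\RR$.

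The crux is then an elementary iteration. From \eqref{titi3}, $\theta(\tau)=\exp\!\big(-\int_0^\tau\|v-JW(v)\|_{X^0}^2\big)>0$ for every finite $\tau$, so $\theta(\tau)\in(0,1]$. If $\theta\not\equiv 1$, then by continuity and the intermediate value theorem there is a $\tau_0>0$ with $\theta_0:=\theta(\tau_0)\in(0,1)$; putting $a:=c\tau_0\neq 0$, the relation gives $\phi(s+a)=\theta_0^{-1}\phi(s)$, and hence $\phi(s+na)=\theta_0^{-n}\phi(s)$ for all $n\in\bN$. Since $\theta_0^{-n}\to\infty$ while $\phi$ is bounded, this forces $\phi\equiv 0$, i.e. $V\equiv Ju^*$ is constant in $s$, so the solution does not propagate, contradicting $c\neq 0$ with nonconstant profile. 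In the remaining case $\theta\equiv 1$, formula \eqref{titi5} gives $v(\tau,s)=V(s)$ independent of $\tau$, and the ansatz reads $V(s-c\tau)=V(s)$ for all $\tau$; with $c\neq 0$, a continuous function of every period $c\tau$ is constant, again precluding a genuine wave.

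I expect the only delicate points to be fixing the precise notion of ``traveling wave'' so that the degenerate steady-state solutions (those with $c=0$ or constant profile) are correctly excluded, and justifying that $\theta(\tau)$ stays strictly positive so that the inversion $\phi(s+a)=\theta_0^{-1}\phi(s)$ is legitimate; both are handled directly from \eqref{titi3}. Everything else is the boundedness-versus-exponential-growth argument above.
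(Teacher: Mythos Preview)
Your argument is correct, but it takes a considerably more hands-on route than the paper. The paper's proof is a single line: every solution of \eqref{detform} converges to a steady state of \eqref{detform} (this was established in Theorem~\ref{titithm} and already in \cite{FJKrT2}), and a genuine traveling wave $v(\tau,s)=V(s-c\tau)$ with $c\neq 0$ and nonconstant profile cannot converge in $X$. Indeed, by translation invariance of the sup norm, $\|v(\tau_1)-v(\tau_2)\|_X=\|V-V(\cdot-c(\tau_2-\tau_1))\|_X$, which would have to vanish for all shifts if $\{v(\tau)\}$ were Cauchy, forcing $V$ constant.

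What you do instead is bypass the convergence theorem entirely and work directly from the explicit representation \eqref{titi5}, reducing to the scalar identity $\phi(s-c\tau)=\theta(\tau)\phi(s)$ and running a growth-versus-boundedness dichotomy. This is a genuinely different argument: it is more elementary in that it uses only \eqref{titi3}--\eqref{titi5} and never invokes the limiting behavior of $\theta$, but it is also longer and requires the case split on $\theta\equiv 1$ versus $\theta\not\equiv 1$. The paper's approach buys brevity by leaning on the heavier structural result already in hand; yours buys self-containment. One small point: your initial definition of traveling wave should explicitly require $V$ nonconstant (you only impose $c\neq 0$ there), since otherwise the conclusion $V\equiv Ju^*$ in Case~1 is not yet a contradiction; you tacitly fix this later, but it is cleaner to state it up front.
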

\begin{proof}
This follows from the fact that each solution of  \eqref{detform} converges to a steady state of \eqref{detform}. 
\end{proof}
\section{Computational evidence that rate estimates are achieved} \label{sec6}
Here we carry out simulations of \eqref{titi6}, the characteristic parametric determining form, to demonstrate that the lower bounds on the rates of convergence
in Theorem \ref{titithm} are achieved for a particular forcing term of the NSE \eqref{NSE}. The use of either \eqref{detform} or \eqref{titi6} for the expressed purpose of locating the attractor is a subject of future work.

Note that the only connection \eqref{titi6} has to the NSE is
through the map $W$. While the rigorous construction of $W$ in \cite{FJKrT2} involves solving \eqref{weqn} with initial data
$w(s_0)=0$, and taking $s_0\to -\infty$, we can effectively approximate $w(s)$ for $0 < s_1 \le s \le s_2$ by taking
$w(0)=0$ and solving forward in time, i.e., taking $s_1$ sufficiently large.  The final time $s_2$ is chosen so as to compute the sup norm of what will be a periodic function of $s$. Just as sufficiently large $\mu$, and correspondingly small $h$ guarantee
that the map $W$ is well-defined, so for similarly chosen $\mu$, $h$ will  $|w(s)-u(s)| \to 0$, as $s\to \infty$, at an exponential rate, if we take $v=Ju$ as is done in data assimilation \cite{AOT}
(see also the computational study \cite{GOT}).  Using Theorem \ref{mainthm} (iii),  we empirically determine that $s_1=1.0$ is a sufficient relaxation time by recovering a particular periodic solution $\tilde{u}(\cdot) \subset \AAA$ (see Figure 1).  The interpolant $J$ is the projection onto the Fourier modes $(i,j)$, $|i|,|j| \le 5$, and the relaxation parameter $\mu=150$.

We consider two cases for $v_0$: one where we know $\theta \to \bar\theta > 0$,
and another where we can expect $\theta \to 0$. For the former we take for $v_0= J(2\tilde{u}-u^*)$, a point on the ray from
$Ju^*$ through $J\tilde{u}(s_1)$, beyond $J\tilde{u}(s_1)$ (double the distance).  In the latter case, $v_0=J\tilde{u}+\delta$, where
$$
\hat{\delta}_{1,1}=  \hat{\delta}_{-i,-j} =0.5 \;,\quad  
\text{and} \quad  \hat{\delta}_{i,j} =0 \quad \text{for} \quad i,j \neq \pm(1,1)\;.
$$

Several techniques are used for computational efficiency.  The NSE and \eqref{weqn} are
solved in vorticity form.  The steady state $u^*$ and force $f$ are defined through the curl;
$$
\omega^*=\nabla \times u^*, \ \phi=\nabla \times f, \text{ where }
\hat{\omega}^*_{3,4}=24+36i, \ \hat{\omega}^*_{5,0}=60+84i, 
\text{ and } \hat{\phi}_{i,j}=25 \hat{\omega}^*_{i,j}\;,
$$
with complex conjugate values at wave vectors $(-3,-4)$ and $(-5,0)$.
 Otherwise all Fourier coefficients are 0.
Rather than compute
the supremum norm at each step in $\tau$, this norm is sampled
at 150 values of $\theta \in [0,1]$ (see Figure \ref{fig2}).  This approach avoids solving over an extended interval
in $s$ which would be needed in a direct approach due to compounding relaxation times.
For each sample point $\theta$,  the solution $w(s)$, $s\in [1.0,1.5]$ is computed by solving (in curl form)
a fully dealiased, $256\times256$ mode pseudospectral discretization of \eqref{weqn} with $\nu=\k0=1$, and $v=\theta v_0 + (1-\theta)J(u^*)$. The time-stepper to solve \eqref{weqn} is a third order Adams-Bashforth scheme, with $\Delta s=5\times 10^{-5}$,
which respects a CFL condition (see \cite{olson2008determining} for more details). The supremum norm in \eqref{titi6} is concurrently computed along with $w$.
Using this approximation of the supremum norm dependence on $\theta$ in \eqref{titi6} results in a piecewise linear
ODE which is solved exactly over each subinterval in $\tau$.
Higher resolution sampling for $\theta \sim 10^{-4}$ is needed to produce smooth convergence for large $\tau$ in Figure \ref{fig3}, demonstrating that the lower bounds
on the rates in Theorem \ref{titithm} are achieved.

We follow a similar procedure for \eqref{etaeqn} to make Figure \ref{fig4}
which indicates that $\eta$ converges at an exponential rate, regardless of whether
$v \to Ju^*$ or $v \to J\tilde{u}$, though in the case of the former the rate still appears to be slower.  We also demonstrate in Table \ref{secant} the advantage of having a real-valued function of
a real variable, i.e., the function $\Phi$ in \eqref{titi6}, identify trajectories in $\AAA$ by applying the secant method to converge
in $\tilde{u}$ in just 8 steps.

\begin{figure}[h]
  \centerline{\includegraphics[scale=.47]{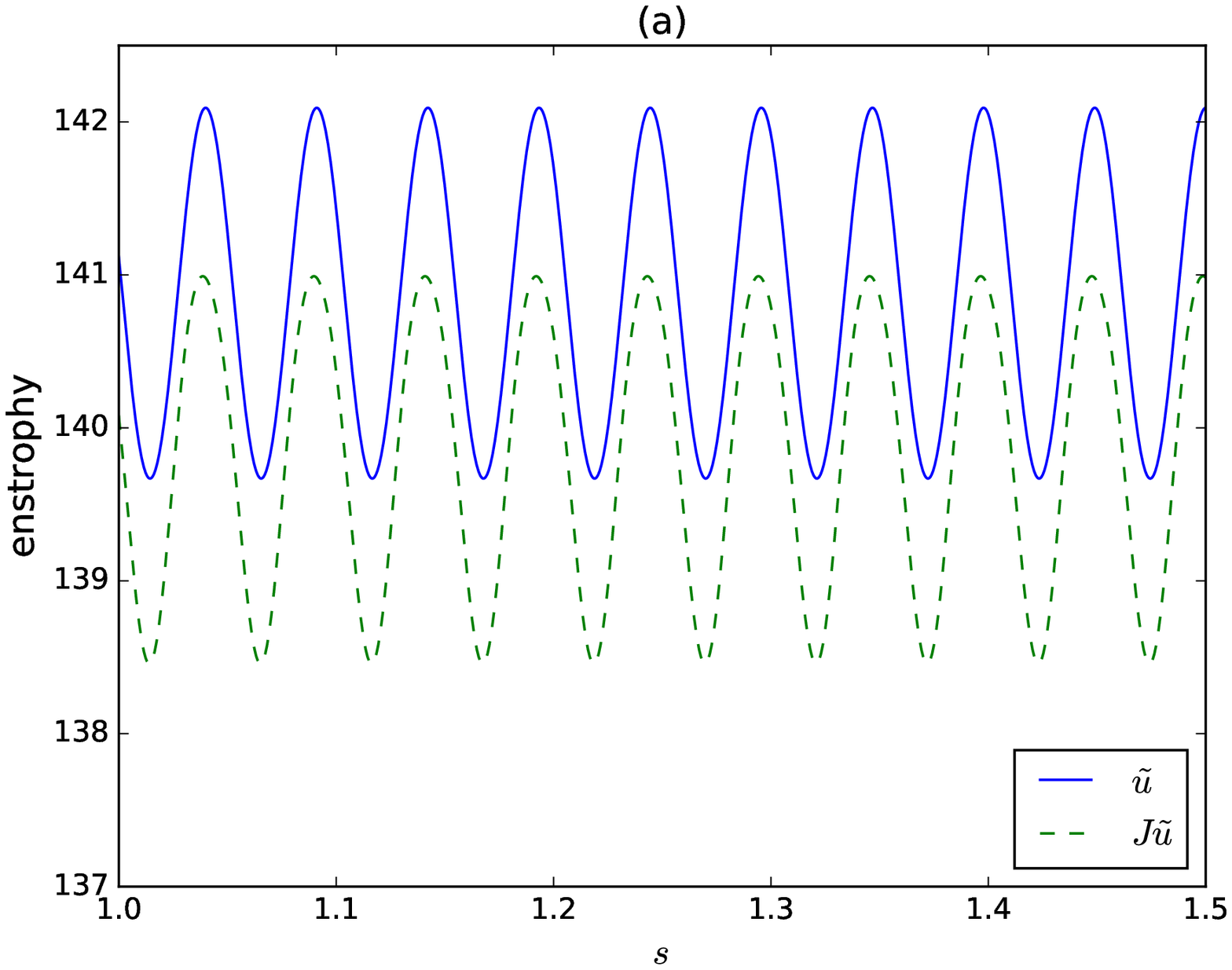}
  \includegraphics[scale=.47]{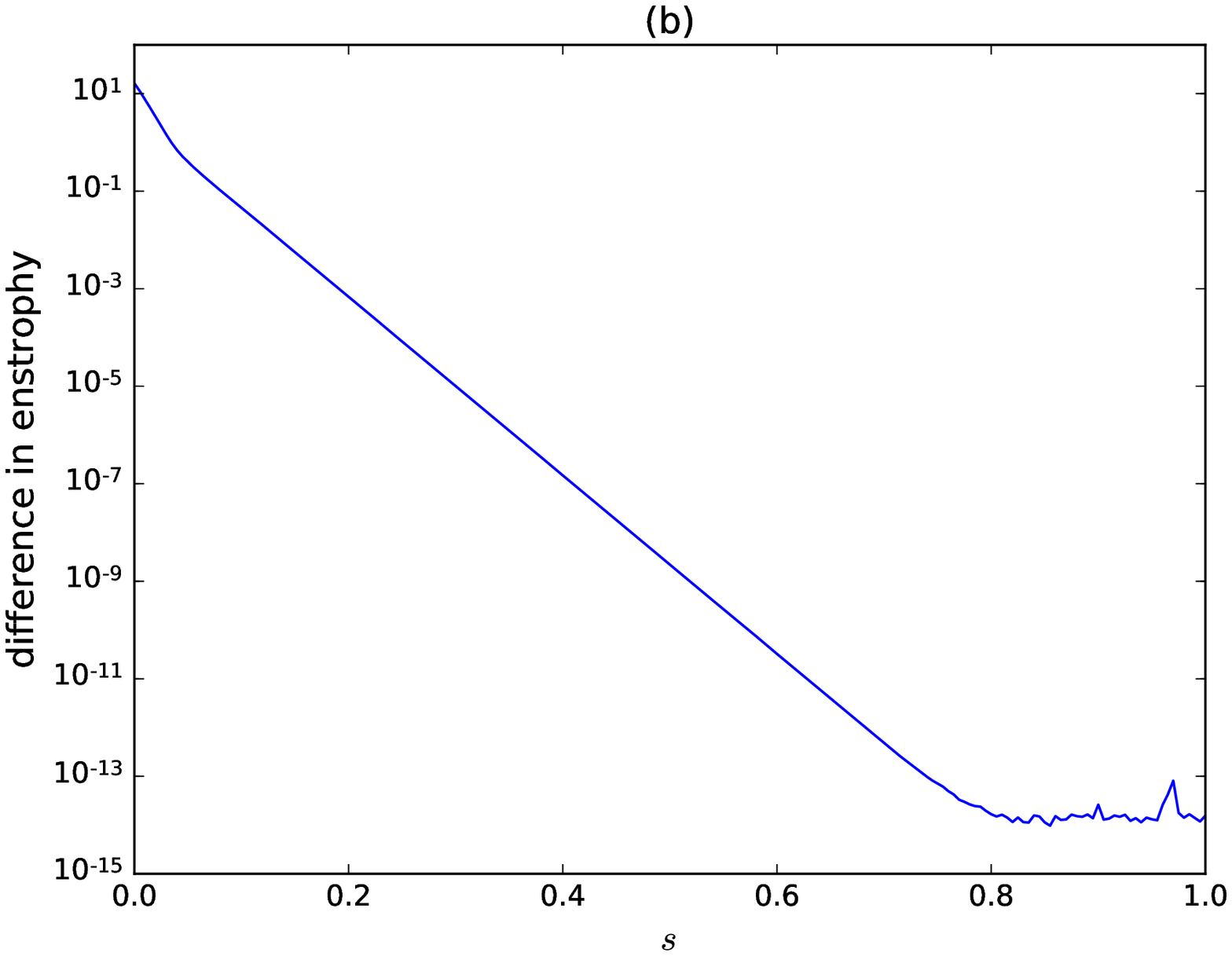}}
\caption{(a) $\tilde{u}(\cdot)\subset \AAA$.
\quad (b) Relaxation of solution of \eqref{weqn} to $\tilde{u}(\cdot)$.}
\label{fig1}
\end{figure}
\begin{figure}[h]
  \centerline{\includegraphics[scale=.25]{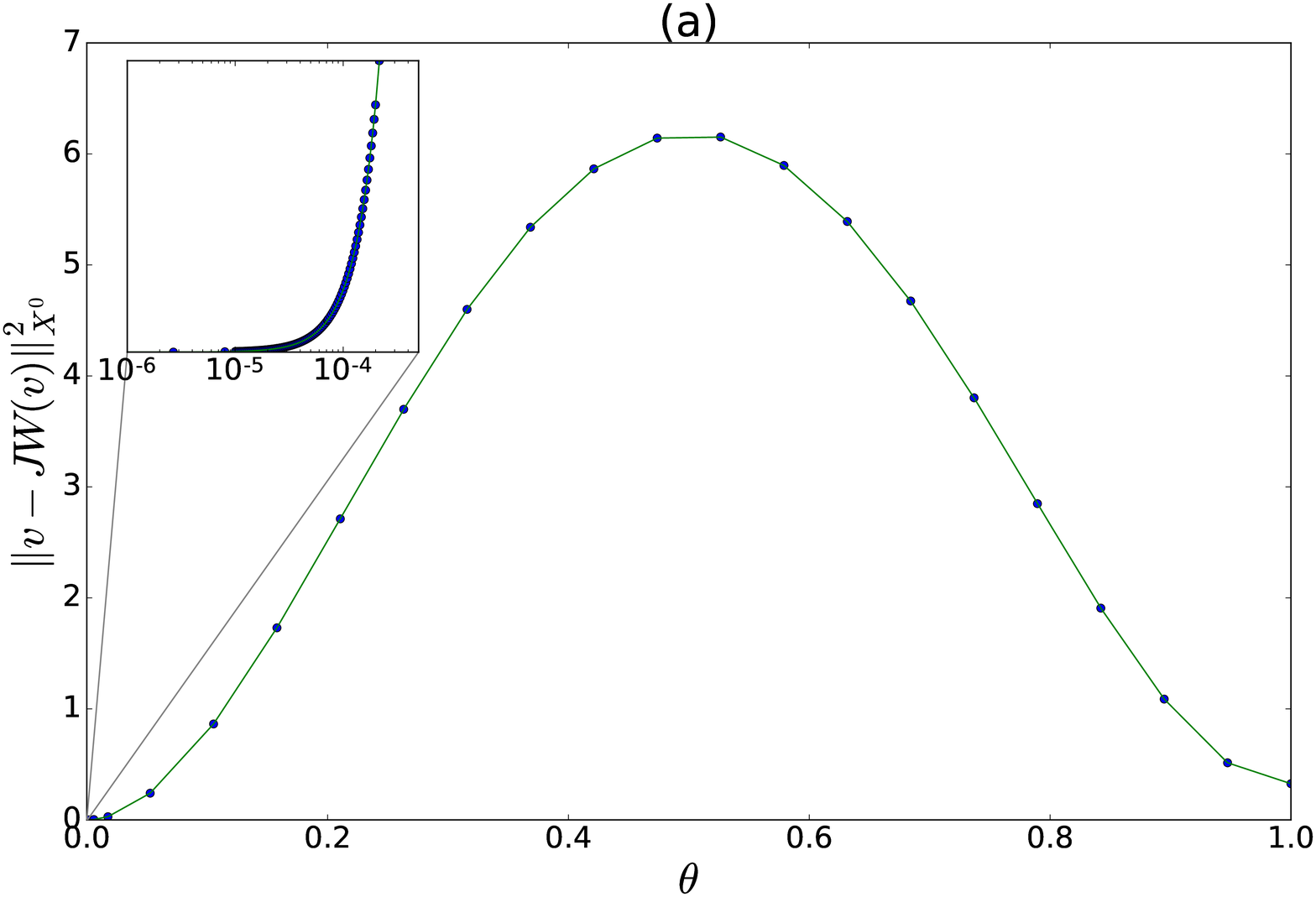}
   \includegraphics[scale=.25]{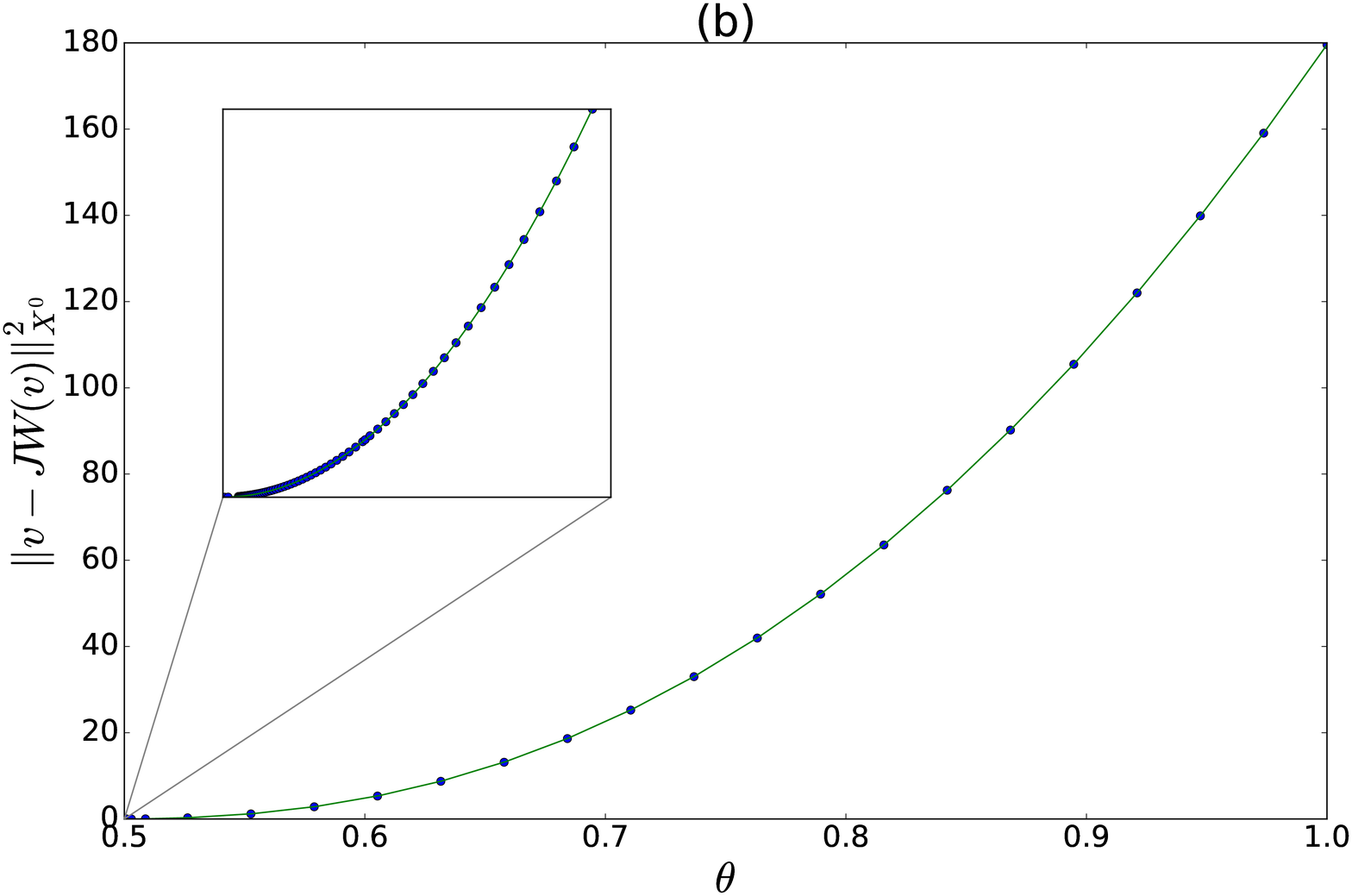}}
\caption{Sampling of supremum norm: (a)  $v_0=J\tilde{u}+\delta $, where we expect $\theta \to 0$,
(b)  $v_0= J(2\tilde{u}-u^*)$, where we know $\theta \to \bar\theta =.5$}
\label{fig2}
\end{figure}
\begin{figure}[h]
 \psfrag{g}{a}
  \centerline{\includegraphics[scale=.47]{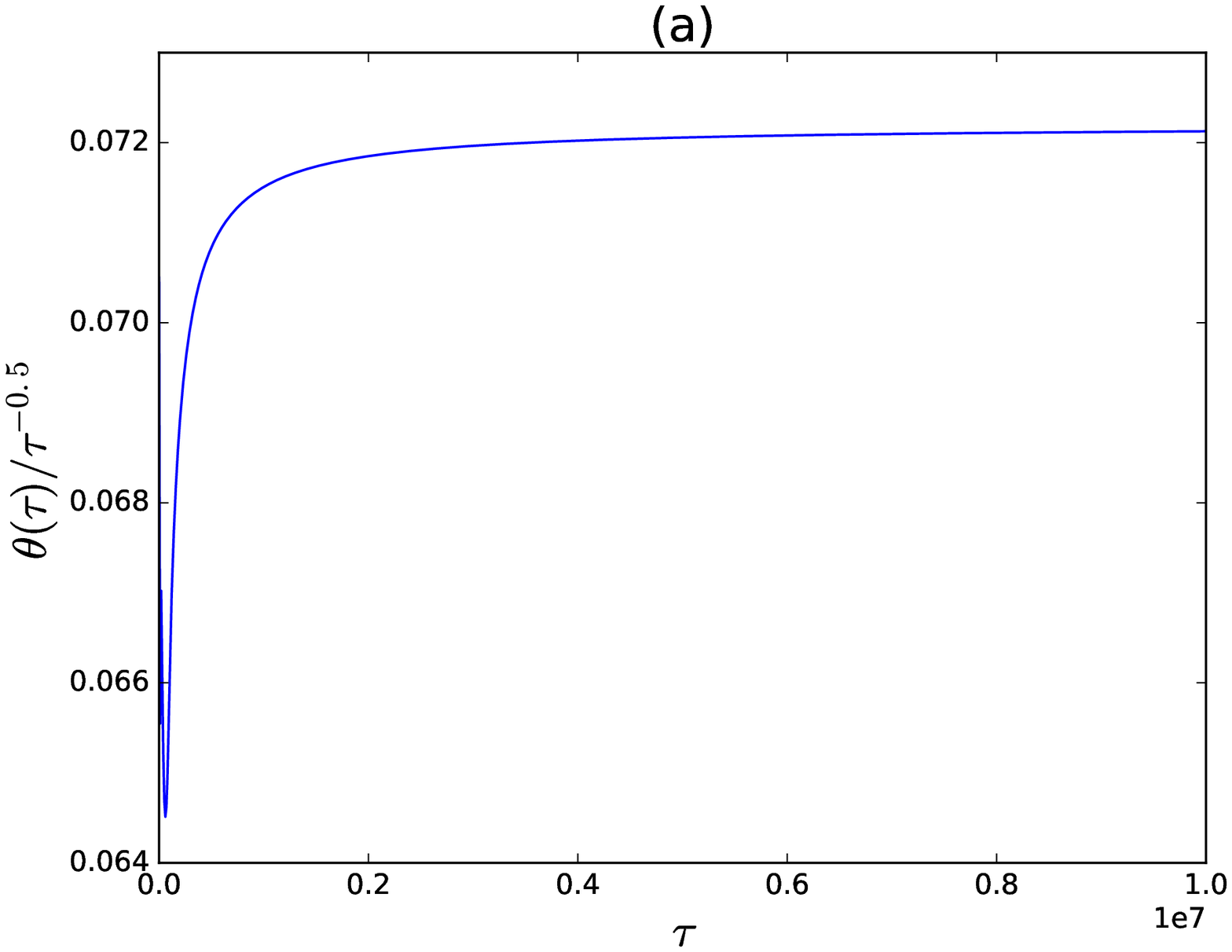}
  \includegraphics[scale=.47]{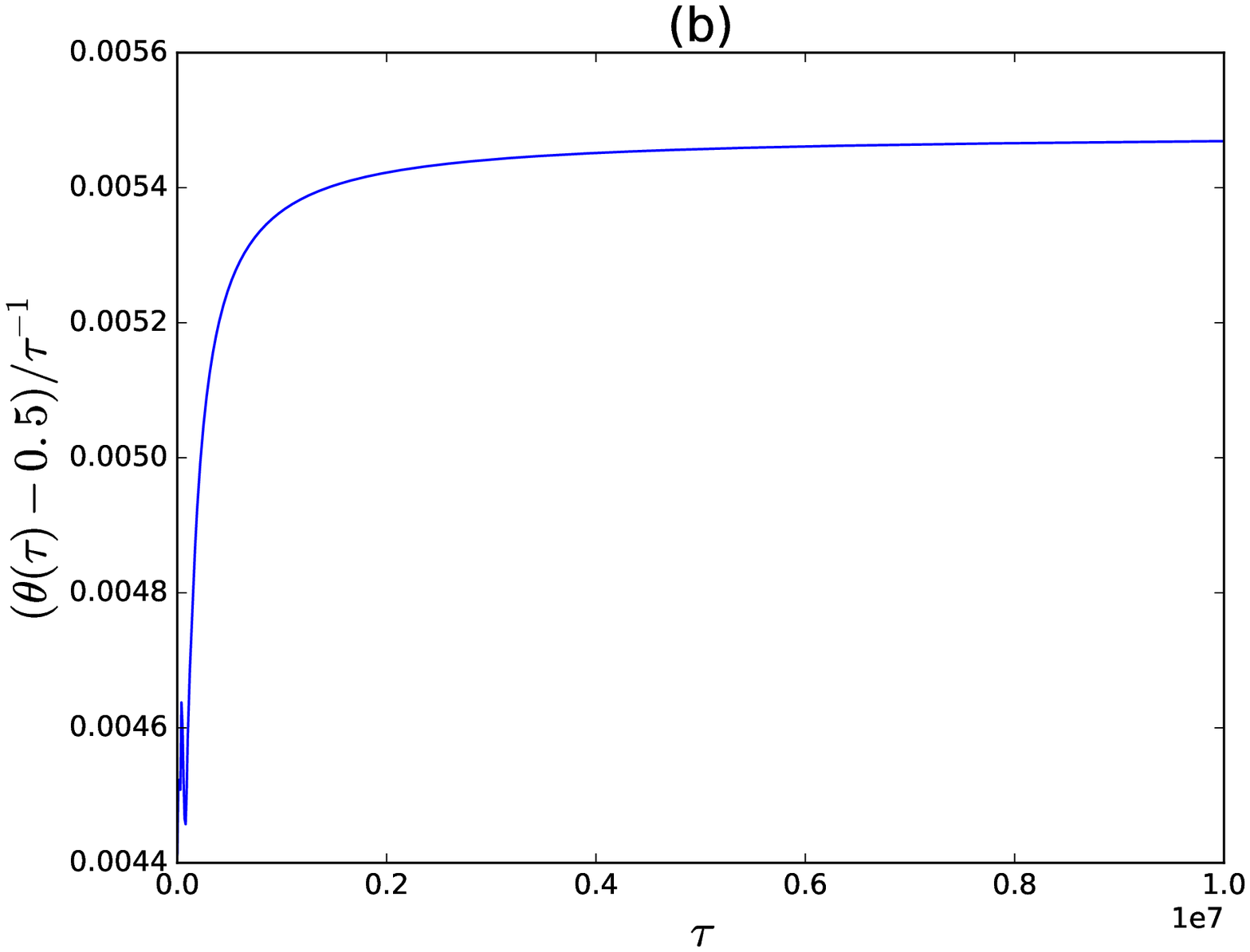}}
\caption{Convergence for \eqref{titi4}: (a) $v_0=J\tilde{u}+\delta$, (b) $v_0=J(2\tilde{u}-u^*)$}
\label{fig3}
\end{figure}

\begin{figure}[h]
 \psfrag{g}{a}
  \centerline{\includegraphics[scale=.65]{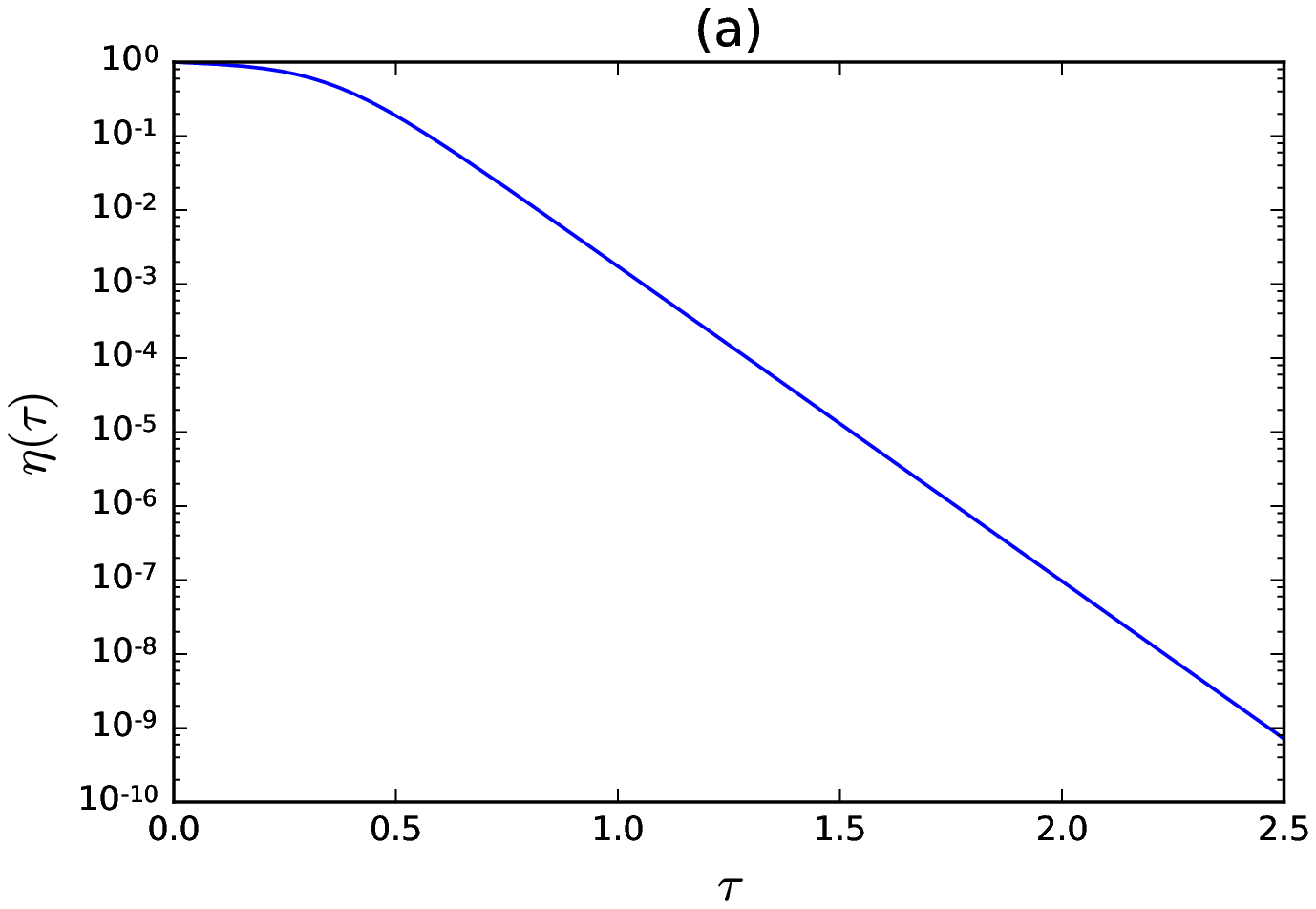}
  \includegraphics[scale=.65]{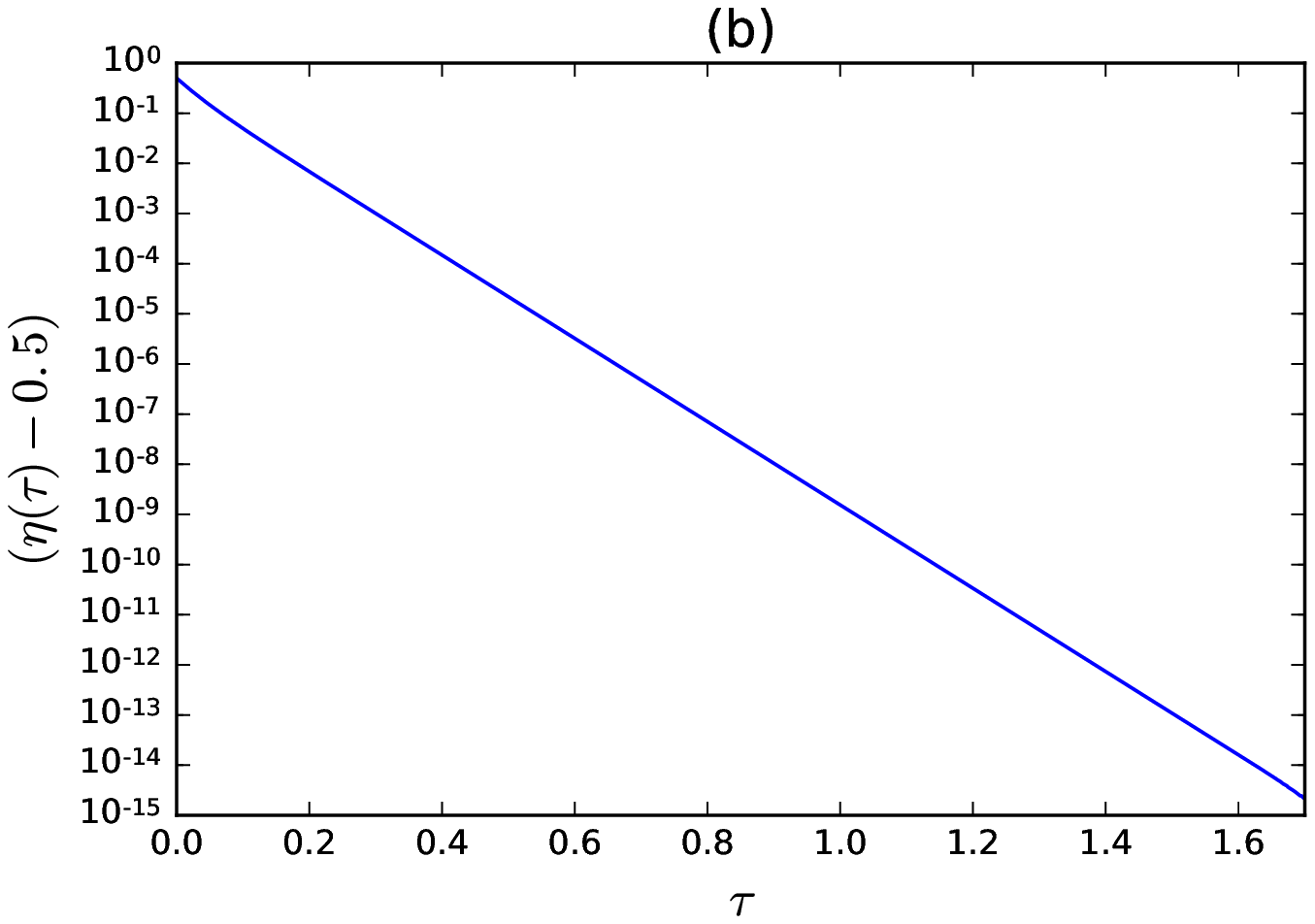}}
\caption{Convergence for \eqref{etaeqn}: (a) $v_0=J\tilde{u}+\delta$, (b) $v_0=J(2\tilde{u}-u^*)$}
\label{fig4}
\end{figure}
\begin{table}
\begin{center}
\begin{tabular}{|c|c||c|c|c|c|} \hline
$i$&  $\eta_i$ &  $\|v_i-JW(v_i)\|_{X^0}$  \\ \hline \hline
0  &	1.00000000000000	& 13.4013441411815 \\ \hline
1 &	0.980000000000000 &	12.8017448867074 \\ \hline
2 &	0.552989966509007 &	1.09311945051355 \\ \hline
3 &	0.513124230285987 &	0.255930982604035 \\ \hline
4  &	0.500937157201634 &	1.792679690457618E-002 \\ \hline
5  &	0.500019210387369 &	3.669263629982810E-004 \\ \hline
6  &	0.500000029216005 &	5.580203907596127E-007 \\ \hline
7 &	0.500000000000913 &	1.745075917466212E-011 \\ \hline
8 & 	0.500000000000000 &	2.455150177114436E-014 \\ \hline
\end{tabular}
\end{center}
\caption{Secant method, $v_0=J(2\tilde{u}-u^*)$, $v_i=\eta_i v_0 + (1-
\eta_i)Ju^*$}
\label{secant}
\end{table}

\section{Final remarks} \label{sec7}
The steady state solutions $\bar \theta$ of \eqref{titi6}, i.e., characteristic determining values which are the zeros of the function
$\Phi$ in \eqref{titi6},  capture all the trajectories on the global
attractor $\AAA$ of the NSE through $W(\bar \theta v_0+(1-\bar \theta )Ju^*)$ as $v_0$ varies
throughout $\BBB^{3R}_X(Ju^*)$.  We do not know if $W$ is differentiable
so using the Newton-Raphson method to find the zeros of $\Phi(\theta;v_0,u^*)$ may not work.
However, since $W$ is Lipschitz one can (and we do here) find the zeros
by the secant method.  In particular, every steady state of the NSE is realized as
the image $W(\bar \theta v_0+(1-\bar \theta )Ju^*)$, where $v_0$ is independent of $s$.  It is thus possible, and perhaps beneficial to study bifurcations of steady states through \eqref{titi6}.  Moreover, if $v_0$ is periodic (with positive minimal period), then
$W(\bar \theta v_0+(1-\bar \theta )Ju^*)$ is either periodic (with the same minimal period) or the steady state $u^*$.  If, on the other hand, $v_0$ is independent of $s \in \bR$,
then $W(\bar \theta v_0+(1-\bar \theta )Ju^*)$ is some steady state of the 2D NSE.  Rigorous lower bounds suggest the rate of convergence toward $Ju^*$ is slower than toward any other steady
state of the determining form.  It remains to study the complete basin of attraction for the determining form of the exceptional state
$Ju^*$.

\section*{Acknowledgements}
The work of C.F. was partially supported by National Science Foundation (NSF) grant
DMS-1109784 and Office of Naval Research  (ONR) grant  N00014-15-1-2333, that of M.S.J. by NSF grant DMS-1418911 and the Leverhulme Trust grant VP1-2015-036,  and D.L. by NSF grant DMS-1418911.  The work of E.S.T. was supported in part by the ONR grant N00014-15-1-2333 and the NSF grants DMS-1109640 and DMS-1109645.

\bibliographystyle{plain}
\bibliography{FJLT.bib}

\end{document}